\newtheorem{thm}{Theorem}[section]
\newtheorem{prop}[thm]{Proposition}
\newtheorem{lem}[thm]{Lemma}
\newtheorem{cor}[thm]{Corollary}
\theoremstyle{definition}
\newtheorem{defin}[thm]{Definition}
\theoremstyle{remark}
\numberwithin{equation}{section}
\providecommand\ufootnote[1]{{\let\thefootnote\relax\footnote[0]{#1}}}
\newcommand{\cc}{\mathcal C}
\newcommand{\dc}{\mathcal D}
\newcommand{\ec}{\mathcal E}
\newcommand{\nb}{\mathbb N}
\newcommand{\cb}{\mathbb C}
\newcommand{\zb}{\mathbb Z}
\newcommand{\ci}{{\mathcal C}^\infty}
\newcommand{\ol}{\overline}
\newcommand{\pa}{\partial}
\newcommand{\opa}{\ol\partial}
\newcommand{\wt}{\widetilde}
\DeclareMathOperator{\supp}{supp} \DeclareMathOperator{\im}{Im}
\DeclareMathOperator{\ke}{Ker}
\begin{document}

\title{On the Hausdorff property of some Dolbeault cohomology groups}

\author{Christine LAURENT-THI\'{E}BAUT and Mei-Chi SHAW}

\date{}
\maketitle

\ufootnote{\hskip-0.6cm UJF-Grenoble 1, Institut Fourier, Grenoble,
F-38041, France\newline CNRS UMR 5582, Institut Fourier,
Saint-Martin d'H\`eres, F-38402, France
\newline Department of Mathematics, University of Notre Dame, Notre Dame, IN 46556, USA}

\ufootnote{\hskip-0.6cm  {\it A.M.S. Classification}~: 32C35, 32C37 .\newline {\it Key words}~: Separation, Dolbeault
cohomology, Duality.}

\bibliographystyle{amsplain}


\section{Introduction}
Let $X$ be a complex manifold.  The study of the closed-range property of the Cauchy-Riemann equations
is of fundamental importance both from the sheaf theoretic point of view and the PDE point of view.  In terms of the associated cohomology, it means that the corresponding
cohomology is Hausdorff, hence separated. There are many known results for the Hausdorff property of such cohomologies
in complex manifolds (see in particular \cite{Tr1}, \cite{Tr2} and \cite{Tr3}). For instance, it is well-known that  for a bounded pseudoconvex domain $D$
in $\cb^n$, the Dolbeault cohomology  $H^{p,q}(D)$ in the Fr\'echet space $C^\infty_{p,q}(D)$ vanishes for all $q>0$.
It is also known that the $L^2$ cohomology also vanishes.  Much less is
known about the cohomologies whose topology does not have the
Hausdorff property, even for domains in $\cb^n$ (an example is given
in \cite{Se}, section 14).

In this paper, we study  duality of the Cauchy-Riemann complex in various function spaces and the Hausdorff property
of the corresponding cohomologies.  Such duality is classical  if the complex manifold is compact.  For domains
in a complex manifold with boundary, it has been established   from the Serre duality  between the Fr\'echet spaces and the test forms with compact support  under the inductive limit topology under the closed-range assumption for the Cauchy-Riemann equations.    When the domain has Lipschitz  boundary, an $L^2$ version of the Serre  duality   has been
formulated in \cite{ChaSh}. In this paper we will formulate the duality for the Cauchy-Riemann complex in various function spaces
and use the duality to study the Hausdorff property of Dolbeault cohomology groups.

The plan of the paper is as follows.  In section 2, we  first explain various duality spaces
under  suitable boundary  conditions. In section 3  we use the duality to study the Hausdorff property of the
cohomology groups  for domains with connected complement.
One of the main results in this paper is to show that if moreover the
domain has Lipschitz boundary, the $L^2$ cohomology
$H_{L^2}^{0,1}(D) $ is either 0 (in this case $D$ is pseudoconvex)  or
it is not Hausdorff (see Theorem \ref{thm2}).  In other words, the Cauchy-Riemann equation $\bar\partial$ does not have closed range from $L^2(D)$ to $L^2_{0,1}(D)$ unless $D$ is
pseudoconvex.   This result does not seem to have been observed in the literature (see p. 76
in Folland-Kohn \cite{fk} for previous known examples and some related
results  in Laufer \cite{laufer} and Trapani \cite{Tr1}).

In section 4 we study the duality of cohomologies on annulus type domains. When the domain is the annulus between two   pseudoconvex domains with smooth boundaries,  it is known that the $L^2$ cohomologies are Hausdorff.
This was proved for the annulus between two  strongly pseudoconvex domains in \cite{fk} and between two weakly pseudoconvex domains in \cite{Sh1} and \cite{Sh2}.   But the cohomology groups could be infinite dimensional. When the domain is the annulus between concentric balls, the cohomologies can be expressed
explicitly (see H\"ormander \cite{Hor}). However,  we will show  (see  Corollary \ref{cor4.6})  that if the smoothness assumption is dropped, the cohomologies could
be non-Hausdorff , a  contrast between the annulus between smooth pseudoconvex domains and non-smooth pseudoconvex domains. We also give some results on  sufficient conditions for the Hausdorff property of Dolbeault cohomologies
 of annuli between domains.
 
\bigskip
\noindent
 {\bf Acknowledgements.}  The authors would like to thank Dr. Debraj Chakrabarti  for his   
 comments and suggestions on  the first  version of this paper.  They would also like to thank Professor Stefano Trapani  for 
 pointing out  some of his  earlier work  related to this  paper (see   \cite{Tr1, Tr2, Tr3} in the references). 

\section{Dual complexes}

Let $X$ be an $n$-dimensional complex manifold and $D\subset X$ an
open subset of $X$. We can define on $D$ several spaces of functions: 
\begin{itemize}
\item $\ec(D)$ the space  of $\ci$-smooth functions on $D$ with its classical Fr\'echet topology,

\item $\ci(\ol D)$ the space of the restrictions to $\ol D$ of $\ci$-smooth functions on $X$, i.e. the Whitney space of smooth functions on the closure of $D$, which can be identified with the quotient of the space of $\ci$-smooth functions on $X$ by the ideal of the functions vanishing with all their derivatives on $\ol D$, with the quotient topology, which coincides with the Fr\'echet topology of uniform convergence on $\ol D$ of the function and of all its derivatives,

\item $L^2(D)$ the Hilbert space of the $L^2$-functions on $D$,

\item $\dc(D)$ the space of smooth functions with compact support in $D$ with the usual topology of inductive limit of Fr\'echet spaces,

\item $\dc_{\ol D}(X)$  the subspace of $\dc(X)$ consisting in the functions with support in $\ol D$, endowed with the natural Fr\'echet topology.
\end{itemize}
\begin{defin}
A \emph{cohomological complex of topological vector spaces} is a pair $(E^\bullet,d)$, where $E^\bullet=(E^q)_{q\in\zb}$ is a sequence of locally convex topological vector spaces and $d=(d^q )_{q\in\zb}$ is a sequence of closed linear maps $d^q$ from $E^q$ into $E^{q+1}$ which satisfy $d^{q+1}\circ d^q=0$.

A \emph{homological complex of topological vector spaces} is a pair $(E_\bullet,d)$, where $E_\bullet=(E_q)_{q\in\zb}$ is a sequence of locally convex topological vector spaces and $d=(d_q )_{q\in\zb}$ is a sequence of closed linear maps $d_q$ from $E_{q+1}$ into $E_{q}$ which satisfy $d_q\circ d_{q+1}=0$.
\end{defin}

To any cohomological complex we associate cohomology groups $(H^q(E^\bullet))_{q\in\zb}$ defined by
$$H^q(E^\bullet)=\ker d^q/\im d^{q-1}$$
and endowed with the factor topology and to any homological complex we associate homology groups $(H_q(E_\bullet))_{q\in\zb}$ defined by
$$H_q(E_\bullet)=\ker d_{q-1}/\im d_{q}$$
and endowed with the factor topology.

We will use several cohomological complexes of differential forms associated to the $\opa$-operator and to the previous functions spaces. For some fixed integer $0\leq p\leq n$,
\begin{itemize}
\item let us consider the spaces $\ec^{p,q}(D)$ of $\ci$-smooth $(p,q)$-forms on $D$, set $E^q=0$ and $d^q\equiv 0$, if $q<0$, $E^q=\ec^{p,q}(D)$ and $d_q=\opa$, if $0\leq q\leq n$.

\item let us consider the spaces $\ci_{p,q}(\ol D)$ of $\ci$-smooth $(p,q)$-forms on $\ol D$, set $E^q=0$ and $d^q\equiv 0$, if $q<0$, $E^q=\ci_{p,q}(\ol D)$ and $d^q=\opa$, if $0\leq q\leq n$.

\item let us consider the spaces $L^2_{p,q}(D)$ of $L^2$-forms on $D$, set $E^q=0$ and $d^q\equiv 0$, if $q<0$, $E^q=L^2_{p,q}(D)$ and $d^q=\opa$, the weak maximal realization of $\opa$, i.e. the $\opa$-operator in the sense of currents, if $0\leq q\leq n$. The domain ${\rm Dom}(\opa)$ of $\opa$ is the space of forms in $L^2_{p,q}(D)$ such that $\opa f$ is also in $L^2_{p,q+1}(D)$.

\item let us consider the spaces $L^2_{p,q}(D)$ of $L^2$-forms on $D$, set $E^q=0$ and $d^q\equiv 0$, if $q<0$, $E^q=L^2_{p,q}(D)$ and $d^q=\opa_s$, the strong $L^2$ closure of $\opa$. A form $f\in {\rm Dom}(\opa_s)$ if and only if there exists a sequence $f_\nu\in\ci_{p,q}(X)$ such that $f_\nu\to f$ and $\opa f_\nu\to\opa f$ in $L^2(D)$ strongly, if $0\leq q\leq n$.

\item let us consider the spaces $\dc^{p,q}(D)$ of $\ci$-smooth $(p,q)$-forms with compact support in $D$, set $E^q=0$ and $d^q\equiv 0$, if $q<0$, $E^q=\dc^{p,q}(D)$ and $d^q=\opa$, if $0\leq q\leq n$.

\item let us consider  the spaces $\dc^{p,q}_{\ol D}(X)$ of $\ci$-smooth $(p,q)$-forms on $X$ with support in $\ol D$, set $E^q=0$ and $d^q\equiv 0$, if $q<0$, $E^q=\dc^{p,q}_{\ol D}(X)$ and $d^q=\opa$, if $0\leq q\leq n$.
\end{itemize}

The associated cohomology groups will be denoted respectively by $H^{p,q}_\infty(D)$, $H^{p,q}_\infty(\ol D)$, $H^{p,q}_{L^2}(D)$, $H^{p,q}_{\opa_s,L^2}(D)$, $H^{p,q}_{c,\infty}(D)$ and $H^{p,q}_{c,\infty}(\ol D)$.

\begin{defin}
The \emph{dual complex} of a cohomological complex $(E^\bullet,d)$ of topological vector spaces is the homological complex $(E'_\bullet,d')$,  where $E'_\bullet=(E'_q)_{q\in\zb}$ with $E'_q$ the strong dual of $E^q$ and $d'=(d'_q )_{q\in\zb}$ with $d'_q$ the transpose of the map $d^q$.
\end{defin}

Next we will study   the dual spaces of the spaces of functions we defined at the beginning of the section.
It is well known that the dual space of $\dc(D)$ is the space $\dc'(D)$ of distributions on $D$ and the dual space of $\ec(D)$ is the space $\ec'(D)$ of distributions with compact support in $D$.

Let us consider the space  $\ci(\ol D)$, by definition the restriction map $$ R~:~\ec(X)\to\ci(\ol D)$$  is continuous and surjective, taking the transpose map $^t R$ we get an injection from $(\ci(\ol D))'$ into $\ec'(X)$ the space of distributions with compact support in $X$. More precisely the image of $(\ci(\ol D))'$ by $^t R$ is clearly included in $\ec'_{\ol D}(X)$, the space of distributions on $X$ with support contained in $\ol D$.

Assume $\dc(X\setminus \ol D)$ is dense in the space of $\ci$-smooth functions on $X$ with support contained in $X\setminus D$, which is fulfilled as soon as the boundary of $D$ is sufficiently regular, for example Lipschitz, then any current $T\in \ec'_{\ol D}(X)$ defines a linear form on $\ci(\ol D)$ by setting, for $f\in\ci(\ol D)$, $<T,f>=<T,\wt f>$, where $\wt f$ is a $\ci$-smooth extension of $f$ to $X$ (the density hypothesis implies that $<T,f>$ is independent of the choice of the extension $\wt f$ of $f$), which is continuous by the open mapping theorem.
Consequently, if $D$ has a Lipschitz boundary, the dual space of $\ci(\ol D)$ is the space $\ec'_{\ol D}(X)$ of distributions on $X$ with support contained in $\ol D$.

Again assume the boundary of $D$ is Lipschitz, then $\dc(D)$ is a dense subspace in $\dc_{\ol D}(X)$, the subspace of $\dc(X)$ consisting of  functions with support in $\ol D$, endowed with the natural Fr\'echet topology then the dual space of $\dc_{\ol D}(X)$ will be a subspace of $\dc'(D)$, the space of distribution on $D$. Since $\dc_{\ol D}(X)\subset\dc(X)$ and its topology coincides with the induced topology by the topology of $\dc(X)$, any continuous form on $\dc_{\ol D}(X)$ can be extended as a distribution on $X$. The dual of $\dc_{\ol D}(X)$ is called the space of extendable distribution on $D$ and denoted by $\check\dc'(D)$.

We summarize  the above discussion in the following lemma.
\begin{lem} Let $D$ be  a domain with Lipschitz boundary in a manifold $X$. The dual space of $\ci(\ol D)$ is the space $\ec'_{\ol D}(X)$ of distributions on $X$ with support contained in $\ol D$.
The dual space of $\dc_{\ol D}(X)$ is   the space of extendable distribution on $D$ and denoted by $\check\dc'(D)$.
\end{lem}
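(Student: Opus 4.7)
The plan is to verify the two duality identifications by transposing natural continuous maps between the relevant test-function spaces. In both cases the Lipschitz hypothesis enters through a single density statement which lets the transpose land precisely on the claimed subspace; once that density is available, the rest is formal functional analysis.

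For the first identification I would start from the continuous surjection $R:\ec(X)\to\ci(\ol D)$ given by restriction, which is tautologically onto since $\ci(\ol D)$ is defined as the quotient of $\ec(X)$ by the ideal $\ker R$ of functions vanishing to infinite order on $\ol D$. The transpose $^tR:(\ci(\ol D))'\to\ec'(X)$ is then injective, and its image is contained in $\ec'_{\ol D}(X)$ because any $\varphi\in\dc(X\setminus\ol D)$ lies in $\ker R$, forcing $\langle {}^tR(T),\varphi\rangle=0$. For the reverse inclusion, given $T\in\ec'_{\ol D}(X)$, I define a form on $\ci(\ol D)$ by $\langle T,f\rangle:=\langle T,\wt f\rangle$ for an arbitrary extension $\wt f\in\ec(X)$ of $f$. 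Independence of the extension is the key point: two extensions differ by an element $g\in\ker R$, whose support lies in $X\setminus D$; by the Lipschitz hypothesis such $g$ is a limit in $\ec(X)$ of elements of $\dc(X\setminus\ol D)$, on which $T$ vanishes by the support condition, whence $\langle T,g\rangle=0$. Continuity of the resulting functional on the Fr\'echet quotient $\ec(X)/\ker R=\ci(\ol D)$ follows from the open mapping theorem.

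For the second identification I would work with the continuous inclusions $\dc(D)\subset\dc_{\ol D}(X)\subset\dc(X)$. The Lipschitz hypothesis again supplies density, this time of $\dc(D)$ in $\dc_{\ol D}(X)$, so restriction to $\dc(D)$ embeds the dual of $\dc_{\ol D}(X)$ injectively into $\dc'(D)$. Since $\dc_{\ol D}(X)$ carries the topology induced from $\dc(X)$, a continuous linear form on $\dc_{\ol D}(X)$ can be realized as the restriction of some distribution on $X$ (either by Hahn-Banach, or directly by viewing such a form as a functional that extends to $\dc(X)$ after a suitable cut-off and mollification). A distribution $u$ on $D$ is thus in the image of the embedding exactly when it admits a continuous extension to $\dc_{\ol D}(X)$, which is precisely the defining property of the space $\check\dc'(D)$ of extendable distributions.

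The main technical obstacle underlying both parts is the pair of density statements
\emph{(i)} $\dc(X\setminus\ol D)$ is dense in $\{f\in\ec(X):\supp f\subset X\setminus D\}$, and
\emph{(ii)} $\dc(D)$ is dense in $\dc_{\ol D}(X)$.
The standard route is to pick a Lipschitz vector field on a neighborhood of $\pa D$ transverse to the boundary (constructed by a partition of unity subordinate to graph charts of the Lipschitz boundary), to flow a given test function slightly off $\pa D$ (outward for \emph{(i)}, inward for \emph{(ii)}) so that it becomes compactly supported away from $\ol D$ (resp. inside $D$), and then to regularize by convolution with a smooth kernel. Convergence of this shift-and-mollify approximation in the appropriate Fr\'echet or inductive limit topology is the core analytic input; once \emph{(i)} and \emph{(ii)} are granted, the identifications above reduce to routine transposition arguments.
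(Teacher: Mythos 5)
Your proposal is correct and follows essentially the same route as the paper: the first identification via the transpose of the restriction map $R:\ec(X)\to\ci(\ol D)$ together with the density of $\dc(X\setminus\ol D)$ in the smooth functions supported in $X\setminus D$ (giving well-definedness on extensions and continuity by the open mapping theorem), and the second via the density of $\dc(D)$ in $\dc_{\ol D}(X)$ plus Hahn--Banach extension through the induced topology from $\dc(X)$. Your shift-and-mollify sketch merely fills in the density statements that the paper simply asserts for Lipschitz boundaries, so the approaches coincide.
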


The space $L^2(D)$ being an Hilbert space is self-dual and moreover the weak maximal realization of a differential operator and its strong minimal realization are dual to each other (see \cite{ChaSh}).

\medskip
The dual complexes up to a sign of the previous ones are $(E'_\bullet,d')$ with:
\begin{itemize}
\item $E'_q=0$ and $d'_q\equiv 0$, if $q<0$, $E'_q=\ec'^{n-p,n-q}(D)$, the space of currents with compact support in $D$, and $d'_q=\opa$, if $0\leq q\leq n$.

\item $E'_q=0$ and $d'_q\equiv 0$, if $q<0$, and if $D$ has a Lipschitz boundary, $E'_q=\ec'^{n-p,n-q}_{\ol D}(X)$, the space of currents with compact support in $X$ whose support is contained in $\ol D$, and $d'_q=\opa$, if $0\leq q\leq n$.

\item $E'_q=0$ and $d'_q\equiv 0$, if $q<0$,  $E'_q=\L^2_{n-p,n-q}(D)$, the space of $L^2$-forms on $D$, and $d'_q=\opa_c$, the strong minimal realization of $\opa$, if $0\leq q\leq n$. A form $f\in {\rm Dom}(\opa_c)$ if and only if there exists a sequence $f_\nu\in\dc_{n-p,n-q}(D)$ such that $f_\nu\to f$ and $\opa f_\nu\to \opa f$, in $L^2$ strongly.

\item $E'_q=0$ and $d'_q\equiv 0$, if $q<0$, and if $D$ has a rectifiable boundary, $E'_q=\L^2_{n-p,n-q}(X,\ol D)$, the space of $L^2$-forms on $X$ with support in $\ol D$, and $d'_q=\opa_{\wt c}$, the weak minimal realization of $\opa$, i.e. the $\opa$-operator in the sense of currents, if $0\leq q\leq n$. The domain ${\rm Dom}(\opa_{\wt c})$ of $\opa$ is the space of forms in $L^2_{n-p,n-q}(X)$ with support in $\ol D$ such that $\opa f$ is also in $L^2_{n-p,n-q+1}(X)$.

\item $E'_q=0$ and $d'_q\equiv 0$, if $q<0$, $E'_q=\dc'^{n-p,n-q}(D)$, the space of currents on $D$, and $d'_q=\opa$, if $0\leq q\leq n$.

\item $E'_q=0$ and $d'_q\equiv 0$, if $q<0$, $E'_q=\check\dc^{n-p,n-q}(X)$, the space of extendable currents, and $d'_q=\opa$, if $0\leq q\leq n$.
\end{itemize}

The associated homology groups are denoted respectively by $H^{n-p,n-q}_{c,cur}(D)$, $H^{n-p,n-q}_{c,cur}(\ol D)$, $H^{n-p,n-q}_{c,L^2}(D)$, $H^{n-p,n-q}_{{\wt c},L^2}(D)$, $H^{n-p,n-q}_{cur}(D)$ and $\check H^{n-p,n-q}_{cur}(D)$.

\medskip
Let us notice that, if $D$ is a bounded domain with Lipschitz boundary in a complex hermitian manifold $X$ of dimension $n$, it follows from the next lemma that, for $0\leq p\leq n$ and $1\leq q\leq n$, the cohomology groups
$H^{p,q}_{c,L^2}(D)$ and $H^{p,q}_{{\wt c},L^2}(D)$ are isomorphic.

\begin{lem}
Let $D\subset\subset X$ be a relatively compact domain with Lipschitz boundary in a complex hermitian manifold $X$. Then a form $f\in{\rm Dom}(\opa_c)$ if and only if both $f^0$ and $\opa(f^0)$ are in $L^2_*(X)$, where $f^0$ denotes the form obtained by extending the form $f$ by $0$ on $X\setminus D$. We in fact have $(\opa_cf)^0=\opa(f^0)$ in the distribution sense.
\end{lem}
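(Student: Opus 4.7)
The plan is to prove the two implications separately; the forward one is routine, while the reverse carries the real content and uses the Lipschitz hypothesis essentially.

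For the forward direction, if $f\in\Dom(\opa_c)$ choose $f_\nu\in\dc^{p,q}(D)$ with $f_\nu\to f$ and $\opa f_\nu\to\opa_cf$ in $L^2_*(D)$. The zero extensions $f_\nu^0\in\dc^{p,q}(X)$ satisfy $\opa(f_\nu^0)=(\opa f_\nu)^0$ classically and converge in $L^2_*(X)$ to $f^0$ and $(\opa_cf)^0$ respectively. Since $\opa$ on $L^2_*(X)$ in the distribution sense is closed, it follows that $f^0\in\Dom(\opa)$ and $\opa(f^0)=(\opa_cf)^0\in L^2_*(X)$.

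For the reverse direction, suppose $f^0$ and $\opa(f^0)$ both lie in $L^2_*(X)$. Fix a finite partition of unity $\{\chi_j\}$ subordinate to a cover of $\ol D$ by holomorphic coordinate balls $U_j$, with $U_j\subset\subset D$ for indices corresponding to interior pieces and $U_j$ a sufficiently small neighborhood of a boundary point otherwise. The Leibniz rule shows that each $\chi_jf^0$ and $\opa(\chi_jf^0)$ remain in $L^2_*(X)$, so it suffices to approximate each piece in the $L^2$-graph norm of $\opa$ by elements of $\dc^{p,q}(D)$. Interior pieces are handled directly by a standard Friedrichs mollification. For a boundary piece, the Lipschitz condition furnishes local coordinates in which $\partial D\cap U_j=\{y_n=\phi_j(y')\}$ with $\phi_j$ Lipschitz and $D\cap U_j=\{y_n>\phi_j(y')\}$; translating $\chi_jf^0$ by $\epsilon e_n$ gives a form whose support lies at Euclidean distance at least $\epsilon/\sqrt{1+\mathrm{Lip}(\phi_j)^2}$ from $\partial D$. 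Because $\opa$ has constant coefficients in holomorphic charts it commutes with translation, and the strong $L^2$-continuity of translations yields convergence of both the translate and its $\opa$ to $\chi_jf^0$ and $\opa(\chi_jf^0)$ in $L^2_*$ as $\epsilon\to 0$. A Friedrichs mollifier of sufficiently small radius then produces a smooth form compactly supported in $D\cap U_j$ arbitrarily close to $\chi_jf^0$ in the graph norm; a diagonal extraction and summation over $j$ provide the required sequence in $\dc^{p,q}(D)$, showing $f\in\Dom(\opa_c)$ with $(\opa_cf)^0=\opa(f^0)$.

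The main obstacle is precisely the preservation of support inside $\ol D$ during the regularization, and the Lipschitz hypothesis is indispensable here through the uniform interior cone it provides. The remaining ingredients (commutation of $\opa$ with translation and convolution in holomorphic charts, the Leibniz rule for the smooth cut-offs, and the patching between local and global forms) are routine and introduce no serious difficulty.
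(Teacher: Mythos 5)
Your proof is correct and follows essentially the same route as the paper: zero extension plus closedness of the distributional $\opa$ for the "only if" part, and a partition of unity, inward translation in local coordinates (where the Lipschitz graph condition guarantees the translated support stays a positive distance inside $D$), and mollification for the "if" part. The only cosmetic differences are that the paper identifies $(\opa_c f)^0=\opa(f^0)$ by an explicit integration by parts against test forms and invokes Friedrichs' lemma for the final smoothing, whereas you use closedness of $\opa$ and direct convolution in holomorphic charts; these are equivalent in substance.
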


\begin{proof}
By definition, given $f\in{\rm Dom}(\opa_c)$, there is a sequence $(f_\nu)_{\nu\in\nb}$ of smooth forms with compact support in $D$ such that $f_\nu\to f$ and $\opa f_\nu\to\opa_c f$, both in $L^2_*(D)$. Then clearly $(f_\nu)^0\to f^0$ in $L^2_*(X)$. It is also easy to see that $\opa (f_\nu)^0\to\opa f^0$ in the distribution sense in $X$. To see that $\opa f^0=(\opa_c f)^0$, we use integration-by-parts (since $\pa D$ is Lipschitz) to have that for any $\varphi\in\cc^1_*(X)$
\begin{align*}
((\opa_c f)^0,\varphi)_X&=(\opa_c f,\varphi)_D\\
&=\lim_{\nu\to\infty}(\opa f_\nu,\varphi)_D\\
&=\lim_{\nu\to\infty}(f_\nu,\vartheta\varphi)_D\\
&=(f^0,\vartheta\varphi)_X\\
&=(\opa f^0,\varphi)_X
\end{align*}
This proves the "only if " part of the result.
Assume now that both $f^0$ and $\opa f^0$ are in $L^2_*(X)$. To show that $f\in{\rm Dom}(\opa_c)$, we need to construct a sequence $(f_\nu)_{\nu\in\nb}$ of smooth forms with compact support in $D$ which converges in the graph norm corresponding to $\opa$ to $f$. By a partition of unity, this is a local problem near each $z\in\pa D$. By assumption on the boundary, for any point $z\in\pa D$, there is a neighborhood $U$ of $z$ in $X$, and for $\varepsilon\geq 0$, a continuous one parameter family $t_\varepsilon$ of biholomorphic maps from $U$ into $X$ such that $t_\varepsilon(D\cap U)$ is compactly contained in $D$, and $t_\varepsilon$ converges to the identity map on $U$ as $\varepsilon\to 0$. In local coordinates near $z$, the map $t_\varepsilon$ is simply the translation by an amount $\varepsilon$ in the inward normal direction. Then we can approximate $f^0$ locally by $f^{(\varepsilon)}$, where
$$f^{(\varepsilon)}=(t_\varepsilon^{-1})^*f^0$$
is the pullback of $f^0$ by the inverse $t_\varepsilon^{-1}$ of $t_\varepsilon$. A partition of unity argument now gives a form $f^{(\varepsilon)}\in L^2_*(X)$ such that $f^{(\varepsilon)}$ is supported inside $D$ and as $\varepsilon\to 0$,
\begin{align*}
&f^{(\varepsilon)}\to f^0\quad {\rm in}~L^2(X)
&\opa f^{(\varepsilon)}\to\opa f^0\quad {\rm in}~L^2(X).
\end{align*}
Since $\pa D$ is Lipschitz, we can apply Friedrich's lemma(see Lemma 4.3.2 in \cite{ChSh}) to the form  $f^{(\varepsilon)}$ to construct the sequence $(f_\nu)_{\nu\in\nb}\subset\dc(D)$.
\end{proof}

The next proposition is a direct consequence of the Hahn-Banach Theorem
\begin{prop}\label{adh}
Let $(E^\bullet,d)$  and $(E'_\bullet,d')$ be two dual complexes, then
$$\ol{\im d^q}=\{g\in E^{q+1}~|~< g,f>=0, \forall f\in \ke d'_q\}.$$
\end{prop}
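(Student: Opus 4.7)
My plan is to establish the stated equality by proving each inclusion separately, with the duality coming from the transpose relation and the reverse inclusion coming from the Hahn--Banach separation theorem.

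For the inclusion $\overline{\operatorname{im} d^q} \subseteq \{g \in E^{q+1} \mid \langle g, f\rangle = 0,\, \forall f \in \ker d'_q\}$, I would start from the defining identity of the transpose: for every $e$ in the domain of $d^q$ and every $f \in \ker d'_q \subseteq \operatorname{Dom}(d'_q)$,
\begin{equation*}
  \langle d^q e,\, f\rangle \;=\; \langle e,\, d'_q f\rangle \;=\; 0.
\end{equation*}
Hence $\operatorname{im} d^q$ is contained in the annihilator $(\ker d'_q)^\perp := \{g \in E^{q+1} \mid \langle g, f\rangle = 0,\ \forall f \in \ker d'_q\}$. This annihilator is the intersection of the kernels of the continuous linear forms $f \in \ker d'_q \subseteq E'_{q+1}$, hence closed in the strong topology of $E^{q+1}$; it therefore contains $\overline{\operatorname{im} d^q}$.

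For the reverse inclusion I would argue by contradiction. Suppose $g \in (\ker d'_q)^\perp$ but $g \notin \overline{\operatorname{im} d^q}$. Since $\overline{\operatorname{im} d^q}$ is a closed convex subset of the locally convex space $E^{q+1}$ not containing $g$, the Hahn--Banach separation theorem produces $\varphi \in E'_{q+1}$ with $\varphi(g) \neq 0$ and $\varphi \equiv 0$ on $\overline{\operatorname{im} d^q}$. In particular, $\langle d^q e,\, \varphi\rangle = 0$ for every $e \in \operatorname{Dom}(d^q)$. By the very characterization of the transpose this forces $\varphi \in \operatorname{Dom}(d'_q)$ with $d'_q \varphi = 0$, i.e.\ $\varphi \in \ker d'_q$. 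The hypothesis on $g$ then yields $\langle g, \varphi\rangle = 0$, contradicting $\varphi(g) \neq 0$.

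The only delicate step is the passage from ``$\varphi$ vanishes on $\operatorname{im} d^q$'' to ``$\varphi \in \operatorname{Dom}(d'_q)$ and $d'_q\varphi = 0$'': this requires that $\operatorname{Dom}(d^q)$ be dense in $E^q$, so that the zero form defined on $\operatorname{Dom}(d^q)$ extends uniquely by continuity to the zero form on $E^q$. For each complex listed in Section~2 this density is automatic, since either $d^q = \opa$ is defined on all of $E^q$ (as in the Fréchet or inductive-limit cases of $\ec^{p,q}$, $\ci_{p,q}(\ol D)$, $\dc^{p,q}(D)$, $\dc^{p,q}_{\ol D}(X)$), or $\operatorname{Dom}(d^q)$ is dense by the very construction of the weak maximal and strong minimal $L^2$ realizations. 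Once this point is noted, the argument reduces to a direct application of Hahn--Banach.
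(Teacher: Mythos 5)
Your argument is correct and is exactly the route the paper intends: the forward inclusion from the transpose identity $\langle d^q e, f\rangle = \langle e, d'_q f\rangle$, and the reverse inclusion by Hahn--Banach separation of $g$ from the closed subspace $\overline{\im d^q}$, which is precisely why the paper calls the proposition ``a direct consequence of the Hahn--Banach Theorem.'' Your extra remark on the density of $\Dom(d^q)$ (needed so that a functional killing $\im d^q$ lies in $\ke d'_q$) is a sensible clarification for the $L^2$ realizations, but it does not change the approach.
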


\begin{cor}\label{nonsep}
Let $(E^\bullet,d)$  and $(E'_\bullet,d')$ be two dual complexes. Assume $H_{q+1}(E'_\bullet)=0$, then either $H^{q+1}(E^\bullet)=0$ or $H^{q+1}(E^\bullet)$ is not Hausdorff.
\end{cor}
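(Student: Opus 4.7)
The plan is to combine Proposition \ref{adh} (which identifies $\overline{\im d^q}$ with the annihilator of $\ker d'_q$) with the standard topological fact that a quotient by a subspace is Hausdorff if and only if that subspace is closed. The hypothesis $H_{q+1}(E'_\bullet)=0$ will let me show that every cocycle is already in this closure, so Hausdorffness collapses it into an actual coboundary.

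First I would translate the Hausdorff assumption: since $\ker d^{q+1}$ is closed in $E^{q+1}$ (as $d^{q+1}$ is a continuous linear map) and $\im d^q\subseteq\ker d^{q+1}$, the quotient topology on $H^{q+1}(E^\bullet)=\ker d^{q+1}/\im d^q$ is Hausdorff if and only if $\im d^q$ is closed in $E^{q+1}$. Thus, supposing $H^{q+1}(E^\bullet)$ is Hausdorff, we have $\im d^q=\overline{\im d^q}$, and by Proposition \ref{adh} this equals
$$\{g\in E^{q+1}\mid \langle g,f\rangle =0 \text{ for every }f\in\ker d'_q\}.$$

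Second, I would show the reverse inclusion $\ker d^{q+1}\subseteq\im d^q$. Pick any $g\in\ker d^{q+1}$ and any $f\in\ker d'_q$. The vanishing of $H_{q+1}(E'_\bullet)=\ker d'_q/\im d'_{q+1}$ gives an $h\in E'_{q+2}$ with $f=d'_{q+1}h$. Using the defining property of the transpose,
$$\langle g,f\rangle=\langle g,d'_{q+1}h\rangle=\langle d^{q+1}g,h\rangle=0,$$
so $g$ lies in the annihilator above, hence in $\im d^q$. This forces $\ker d^{q+1}=\im d^q$ and therefore $H^{q+1}(E^\bullet)=0$, yielding the required dichotomy.

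There is essentially no substantive obstacle: the argument is a two-line consequence of Proposition \ref{adh} once one unpacks the definitions. The only point requiring care is the bookkeeping with the transpose convention (and the harmless sign alluded to in the paper's phrasing ``up to a sign'') to confirm that $\langle g,d'_{q+1}h\rangle=\langle d^{q+1}g,h\rangle$ in the indexing used here; this is immediate from the definition of the dual complex.
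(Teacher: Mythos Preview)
Your proof is correct and is essentially the same as the paper's: both establish that under the hypothesis $H_{q+1}(E'_\bullet)=0$ the annihilator $\{g\in E^{q+1}\mid \langle g,f\rangle=0\text{ for all }f\in\ker d'_q\}$ coincides with $\ker d^{q+1}$, and then invoke Proposition~\ref{adh} to conclude $\overline{\im d^q}=\ker d^{q+1}$, whence closed range forces $H^{q+1}(E^\bullet)=0$. Your transpose computation $\langle g,d'_{q+1}h\rangle=\langle d^{q+1}g,h\rangle$ is exactly the step the paper summarizes by ``this inclusion becomes an equality when $H_{q+1}(E'_\bullet)=0$''.
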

\begin{proof}
Note that
$$\{g\in E^{q+1}~|~< g,f>=0, \forall f\in \ke d'_q\}\subset \ke d^{q+1}.$$
and this inclusion becomes an equality when $H_{q+1}(E'_\bullet)=0$. In that case, it follows from Proposition \ref{adh} that if $d^q$ has closed range then $H^{q+1}(E^\bullet)=0$.
\end{proof}

To end this section let us recall some well-known results about duality for complexes of topological vector spaces proved in \cite{Ca}, and Serre duality proved in \cite{LaLedualite} and \cite{Se}.

\begin{thm}\label{dual}
Let $(E^\bullet,d)$ be a complex of Fr\'echet-Schwarz-spaces or of dual of Fr\'echet-Schwarz-spaces and $(E'_\bullet,d')$ its dual complex. Then for each $q\in\zb$, $H^{q+1}(E^\bullet)$ is Hausdorff if and only if $H_q(E'_\bullet)$ is Hausdorff.
\end{thm}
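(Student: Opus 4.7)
The plan is to reduce both Hausdorff statements to a single closed range property for the differential $d^q\colon E^q\to E^{q+1}$, and then invoke a closed range theorem valid in the FS/DFS category.

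First I would observe that, since $\ke d^{q+1}$ is closed in $E^{q+1}$ as the kernel of a continuous map, the quotient space $H^{q+1}(E^\bullet)=\ke d^{q+1}/\im d^q$ is Hausdorff if and only if $\im d^q$ is closed in $\ke d^{q+1}$, equivalently in $E^{q+1}$. The same argument applied to $H_q(E'_\bullet)=\ke d'_{q-1}/\im d'_q$ shows that its Hausdorffness is equivalent to $\im d'_q$ being closed in $E'_q$ (strong topology). Since by definition $d'_q$ is the transpose of $d^q$, the theorem boils down to the single claim: $\im d^q$ is closed in $E^{q+1}$ if and only if $\im (d^q)^t$ is closed in $E'_q$.

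The key step is then to apply the closed range theorem in the category under consideration, that is: for a continuous linear map $T\colon E\to F$ between two FS spaces (or two DFS spaces), $\im T$ is closed in $F$ if and only if $\im T^t$ is closed in $E'_b$. Proposition \ref{adh}, combined with the Hahn--Banach theorem, already gives the identification $\ol{\im T}=(\ke T^t)^\perp$ and dually $\ol{\im T^t}=(\ke T)^\perp$, so the content of the closed range theorem is the equivalence between these two closedness properties. This is precisely the statement established in Cass\`a \cite{Ca}; one applies it to $T=d^q$ to finish.

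The main obstacle is this closed range theorem itself. For an arbitrary continuous linear map between Fr\'echet spaces, Banach's classical closed range theorem must be formulated with the weak-$*$ topology on the dual, and there is no a priori reason why strongly-closed and weakly-$*$-closed images should agree. It is exactly the FS/DFS hypothesis -- reflexivity together with the Schwartz (Montel) property -- that reconciles the two dual topologies and upgrades Banach's statement to the strong-dual version needed here. Since this is the technical heart of the result and is already proven in the references cited, the argument reduces to the reformulation of Step 1 followed by a direct appeal to the quoted theorem.
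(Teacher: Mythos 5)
Your reduction is correct: Hausdorffness of $H^{q+1}(E^\bullet)$ and of $H_q(E'_\bullet)$ are indeed equivalent to closedness of $\im d^q$ in $E^{q+1}$ and of $\im d'_q={\im}\,{}^t(d^q)$ in $E'_q$ respectively, and the equivalence of these two closed-range properties in the Fr\'echet--Schwartz/DFS setting is exactly the content of the result the paper itself quotes from \cite{Ca} without proof. So your argument follows essentially the same route as the paper (standard quotient-topology reduction plus appeal to the closed-range/duality theorem of Cassa), with the only minor caveat that the differentials are allowed to be closed, densely defined maps rather than continuous ones, a case the cited theorem also covers.
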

{\parindent 0pt Theorem \ref{dual} will be used in section \ref{s4} for the complexes $(\ci_{p,\bullet}(\ol D),\opa)$ and $(\dc^{p,\bullet}_{\ol D}(X),\opa)$.}

For the complex $(\ec^{p,\bullet}(D),\opa)$, Serre duality (see
\cite{LaLedualite}, Theorem 1.1) gives

\begin{thm}\label{Sdual}
For all integers $p$,$q$ with $0\leq p\leq n$, $1\leq q\leq n$,
$H^{p,q}_\infty(D)$ is Hausdorff if and only if
$H_{c,\infty}^{n-p,n-q+1}(D)$  is Hausdorff. 
\end{thm}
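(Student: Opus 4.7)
The plan is to combine the abstract duality Theorem \ref{dual} with a regularization argument that identifies compactly supported Dolbeault cohomology of currents with that of smooth compactly supported forms.

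First I would check that $(\ec^{p,\bullet}(D),\opa)$ is a complex of Fr\'echet-Schwartz spaces: each $\ec^{p,q}(D)$ is a nuclear Fr\'echet space for its standard topology of uniform convergence of all derivatives on compacts of $D$, and $\opa$ is a closed operator between them. Its dual complex up to sign is, as recorded in the bullet list of Section 2, $(\ec'^{n-p,n-\bullet}(D),\opa)$, whose homology groups are $H^{n-p,n-\bullet}_{c,cur}(D)$. Applying Theorem \ref{dual} at level $q$ (in the cohomological indexing, this corresponds to the degree $q{+}1$ cohomology of $E^\bullet$ and the degree $q$ homology of $E'_\bullet$) and reindexing yields
$$H^{p,q}_\infty(D) \text{ is Hausdorff} \iff H^{n-p,n-q+1}_{c,cur}(D) \text{ is Hausdorff}.$$

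Second I would establish the topological isomorphism
$$H^{n-p,n-q+1}_{c,\infty}(D) \cong H^{n-p,n-q+1}_{c,cur}(D)$$
induced by the natural inclusion $\dc^{n-p,n-q+1}(D)\hookrightarrow \ec'^{n-p,n-q+1}(D)$. This is a Dolbeault-type regularization lemma for compactly supported objects: using a partition of unity subordinate to local charts together with convolution by a smooth approximate identity $\chi_\varepsilon$, one constructs a regularizing operator $R_\varepsilon$ from compactly supported currents to smooth compactly supported forms, and a homotopy operator $K_\varepsilon$ such that
$$T - R_\varepsilon T = \opa K_\varepsilon T + K_\varepsilon \opa T,$$
with the supports of $R_\varepsilon T$ and $K_\varepsilon T$ contained in an $\varepsilon$-neighborhood of $\supp T$. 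Since $\supp T$ is compact in $D$, choosing $\varepsilon$ small enough keeps this neighborhood inside $D$, and the identity above shows that the inclusion is a quasi-isomorphism. Composition with this isomorphism transfers Hausdorffness between the two cohomology groups.

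The main technical obstacle is the second step: one must not only verify that the inclusion induces an abstract isomorphism on cohomology, but also show that it is a \emph{topological} isomorphism between quotient spaces, so that Hausdorffness is really preserved. This requires continuous dependence of $R_\varepsilon$ and $K_\varepsilon$ in the inductive limit topology of $\dc^{n-p,n-q+1}(D)$ and the strong dual topology on $\ec'^{n-p,n-q+1}(D)$, together with compatibility with the $\opa$-graph topology. The careful verification of these points is precisely what is carried out in \cite{LaLedualite} and \cite{Se}, which justifies citing the statement as recorded.
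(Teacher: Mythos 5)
The paper offers no internal proof of Theorem \ref{Sdual}: it is quoted from \cite{LaLedualite}, Theorem 1.1 (see also \cite{Se}), so there is nothing to compare your argument with except the reference itself. Judged on its own, your first step is correct: $(\ec^{p,\bullet}(D),\opa)$ is a complex of Fr\'echet--Schwartz spaces, its strong dual complex is $(\ec'^{n-p,n-\bullet}(D),\opa)$ up to sign, and Theorem \ref{dual}, after the index shift, gives that $H^{p,q}_\infty(D)$ is Hausdorff if and only if $H^{n-p,n-q+1}_{c,cur}(D)$ is Hausdorff. (Note that one cannot instead dualize the compactly supported smooth complex directly, since $\dc^{p,\bullet}(D)$ is neither Fr\'echet--Schwartz nor the dual of such a space; this is why currents necessarily appear and why your second step cannot be avoided.)

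The genuine gap is in that second step. The homotopy identity $T-R_\varepsilon T=\opa K_\varepsilon T+K_\varepsilon\opa T$ with support control is asserted, not constructed, and it does not follow from ``partition of unity plus convolution'': cut-off functions do not commute with $\opa$, convolution only makes sense chart by chart, and even inside one chart the homotopy attached to translations is a $d$-homotopy whose bidegree-preserving part contains $\pa\iota_{v^{1,0}}+\iota_{v^{1,0}}\pa$ terms when $p>0$, so extracting a genuine global $\opa$-homotopy with the stated support and continuity properties is a real piece of work (the de Rham regularization you are implicitly invoking is a theorem about $d$, not $\opa$). Moreover, even the algebraic isomorphism $H^{n-p,n-q+1}_{c,\infty}(D)\cong H^{n-p,n-q+1}_{c,cur}(D)$, which does follow from sheaf theory, is not enough: Hausdorffness is a property of the quotient topology, i.e.\ of closed range of $\opa$ in each of the two topologies, and the equivalence of these two closed-range properties is precisely the nontrivial content of \cite{LaLedualite}, Theorem 1.1. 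Your closing sentence defers exactly this point to \cite{LaLedualite} and \cite{Se}, which makes the proposal circular as a proof: it reduces the theorem to the very statement the paper quotes. To be self-contained you would need to actually carry out the topological smooth/current comparison (for instance via Serre's regularization of currents, or the chain of closed-range equivalences established in \cite{LaLedualite}) rather than cite it.
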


Finally note that, by the Dolbeault isomorphism, the Dolbeault
cohomology groups $H^{p,q}_\infty(X)$ (resp. $H_{c,\infty}^{p,q}(X)$)
and $H^{p,q}_{cur}(X)$ (resp. $H_{c,cur}^{p,q}(X)$) of a complex
manifold $X$ are isomorphic, so we will simply denote them by 
$H^{p,q}(X)$ (resp. $H_c^{p,q}(X)$).

\section{Hausdorff property for domains with connected complement}\label{s3}

Throughout this section $X$ denotes a non-compact  $n$-dimensional complex manifold and $D\subset\subset X$ a relatively compact subset of $X$ such that $X\setminus D$ is connected.

\begin{lem}\label{supp}
 Assume $X$ satisfies $H^{n,1}_c(X)=0$, then for each current $T\in\ec'^{n,1}(X)$ with support contained in $\ol D$ there exists a $(n,0)$-current $S$ with compact support in $X$, whose support is contained in $\ol D$, such that $\opa S=T$. Moreover if $T\in (L^2_{loc})^{n,1}(X)$ (resp. $T\in\ec^{n,1}(X)$), the solution $S$ is also in $L^2_{loc}(X)$ (resp. $\ec(X)$), hence $H^{n,1}_{\wt c, L^2}(D)=0$ (resp. $H^{n,1}_{c,\infty}(\ol D)=0$) and if the support of $T$ is contained in $D$, the support of $S$ is also contained in $D$, i.e. $H^{n,1}_{c,cur}(D)=0$.
 \end{lem}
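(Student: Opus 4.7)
The plan is to apply the vanishing hypothesis $H^{n,1}_c(X)=0$ to obtain a compactly supported solution on $X$, and then to use analytic continuation together with the connectedness of $X\setminus D$ to reduce its support to $\ol D$.

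First, since $T$ is a $\opa$-closed compactly supported $(n,1)$-current on $X$, the hypothesis $H^{n,1}_c(X)=0$ produces an $(n,0)$-current $S_0$ with compact support in $X$ satisfying $\opa S_0=T$. The $L^2_{\loc}$ and $\ci$ regularity refinements follow by applying the Dolbeault isomorphism in the corresponding complexes and using the hypoellipticity of $\opa$ acting on $(n,0)$-coefficients.

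Next, since $\supp T\subset\ol D$, the current $S_0$ is $\opa$-closed on $X\setminus\ol D$, so by hypoellipticity it is a smooth holomorphic $n$-form there. As $S_0$ has compact support $K$ in $X$ and $K\cup\ol D$ is compact in the non-compact manifold $X$, the open set $U:=X\setminus(K\cup\ol D)$ is non-empty and contained in $X\setminus\ol D$, and on it $S_0=0$. By the identity principle, $S_0$ vanishes on every connected component of $X\setminus\ol D$ meeting $U$. To propagate this to every other component one uses the assumption that $X\setminus D$ is connected: alternatively, one may run the Hartogs-type argument of extending $S_0|_{X\setminus\ol D}$ across $\ol D$ by cutting off with a $\chi\in\ec(X)$ vanishing near $\ol D$ and solving $\opa u=\opa(\chi S_0)$ with compactly supported $u$ via $H^{n,1}_c(X)=0$; the difference $\chi S_0-u$ is then a holomorphic $n$-form on $X$ which vanishes outside a compact set and hence vanishes identically on the connected manifold $X$, forcing $S_0\equiv 0$ on $X\setminus\ol D$. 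This gives $\supp S_0\subset\ol D$, so setting $S:=S_0$ yields the $L^2_{\loc}$ and smooth vanishings $H^{n,1}_{\wt c,L^2}(D)=0$ and $H^{n,1}_{c,\infty}(\ol D)=0$.

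Finally, if $\supp T\subset D$ strictly, then $S$ is $\opa$-closed on the open neighborhood $X\setminus\supp T$ of $\pa D$ and vanishes on $X\setminus\ol D$; local analytic continuation in coordinate balls around each boundary point of $D$ then forces $S$ to vanish in a neighborhood of $\pa D$, giving $\supp S\subset D$ and $H^{n,1}_{c,cur}(D)=0$.

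The main obstacle is the step that forces $S_0$ to vanish on \emph{every} component of $X\setminus\ol D$: without the connectedness of $X\setminus D$, a bounded component could in principle shelter a non-trivial holomorphic remnant of $S_0$, and it is precisely the interplay between the cohomological hypothesis $H^{n,1}_c(X)=0$ (supplying Hartogs-type extension across compact sets) and the topological hypothesis (supplying uniqueness of the extension through the identity principle on the connected manifold $X$) that closes the argument.
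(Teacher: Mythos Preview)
Your argument follows the paper's exactly: use $H^{n,1}_c(X)=0$ to produce a compactly supported solution $S_0$, observe that $S_0$ is a holomorphic $(n,0)$-form on $X\setminus\ol D$ which vanishes on a nonempty open subset, and conclude $\supp S_0\subset\ol D$ by analytic continuation together with the connectedness of $X\setminus D$; the regularity refinements follow because any two solutions differ by a holomorphic $(n,0)$-form (which is automatically smooth and locally $L^2$). This is precisely the paper's proof, and your treatment of the case $\supp T\subset D$ via the open set $X\setminus\supp T\supset X\setminus D$ is also in line with it.

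The Hartogs-type alternative you propose, however, does not deliver what you claim. From $\chi S_0-u\equiv 0$ you only obtain $\chi S_0=u$ on $X$; on a hypothetical bounded component $C$ of $X\setminus\ol D$ the cutoff $\chi$ must vanish near $\pa C\subset\ol D$, so it cannot be identically $1$ on $C$, while $u$ has no reason to be holomorphic there---hence nothing in the identity $\chi S_0=u$ forces $S_0|_C=0$. Put differently, the Hartogs construction produces a global holomorphic form agreeing with $S_0$ on the \emph{unbounded} component of $X\setminus\ol D$ (where $\chi=1$), not on all of $X\setminus\ol D$, so showing that this extension is zero does not by itself kill $S_0$ on bounded components. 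The paper does not take this detour; it simply invokes analytic continuation and the connectedness hypothesis directly. Your instinct that this passage deserves care is reasonable, but the Hartogs argument as you have written it does not supply that care, and you should drop it in favor of the paper's direct appeal to the identity principle.
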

 \begin{proof}
 Let $T\in\ec'^{n,1}(X)$ be a current with support contained in $\ol D$. Since $H^{n,1}_c(X)=0$, there  exists a $(n,0)$-current $S$ with compact support in $X$ such that $\opa S=T$. Since the support of $T$ is contained in $\ol D$, the current $S$ is an holomorphic $(n,0)$-form on $X\setminus \ol D$, moreover $S$ has compact support in $X$ and hence vanishes on an open subset of  $X\setminus \ol D$. By  the analytic continuation theorem, the connectedness of $X\setminus D$ implies that the support of $S$ is contained in $\ol D$ and if  moreover the support of $T$ is contained in $D$, the support of $S$ is also contained in $D$. Assume $T\in (L^2_{loc})^{n,1}(X)$ (resp. $\ec^{n,1}(X)$), then the $\opa$-equation has a solution in $L^2_{loc}(X)$ (resp.$\ec(X)$) and as we are in bidegree $(n,1)$, two solutions differ by a holomorphic $(n,0)$-form hence the solution $S$ is in $L^2_{loc}(X)$ (resp. $\ec(X)$).
 \end{proof}

 As a direct consequence of Corollary \ref{nonsep} and Lemma \ref{supp}, we get

 \begin{thm}\label{sep}
 Let $X$ be an $n$-dimensional complex manifold and $D\subset\subset X$ a relatively compact subset of $X$ such that $X\setminus D$ is connected. Assume $X$ satisfies $H^{n,1}_c(X)=0$, then

 (i) Either $H^{0,n-1}(D)=0$ or  $H^{0,n-1}(D)$ is not Hausdorff;

 (ii) If $D$ has a Lipschitz boundary, either $H^{0,n-1}_\infty(\ol D)=0$ or  $H^{0,n-1}_\infty(\ol D)$ is not Hausdorff;

(iii) If $D$ has a Lipschitz boundary, either $H^{0,n-1}_{L^2}(D)=0$ or  $H^{0,n-1}_{L^2}(D)$ is not Hausdorff;

(iv) If $D$ has a rectifiable boundary, either $H^{0,n-1}_{\opa_s,L^2}(D)=0$ or  $H^{0,n-1}_{\opa_s,L^2}(D)$ is not Hausdorff;

 (v) If $D$ has a Lipschitz boundary, either $\check H^{0,n-1}(D)=0$ or  $\check H^{0,n-1}(D)$ is not Hausdorff.
 \end{thm}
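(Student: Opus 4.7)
The plan is to apply Corollary \ref{nonsep} separately to each of the five statements. For each item I identify the cohomological complex $(E^\bullet,d)$ whose $(n-1)$-th cohomology is the target group, read off its dual complex $(E'_\bullet,d')$ from the list at the end of Section 2, and verify that $H_{n-1}(E'_\bullet)$ reduces, after matching bidegrees, to one of the four vanishing statements in bidegree $(n,1)$ supplied by Lemma \ref{supp}: $H^{n,1}_{c,cur}(D)=0$, $H^{n,1}_{c,cur}(\ol D)=0$, $H^{n,1}_{\wt c,L^2}(D)=0$, or $H^{n,1}_{c,\infty}(\ol D)=0$. Once that matching is done, Corollary \ref{nonsep} delivers the stated dichotomy.

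The matching is straightforward in (i), (ii), (iv), and (v). For (i), take $E^\bullet=(\ec^{0,\bullet}(D),\opa)$; the dual is the complex of compactly supported currents on $D$ and $H_{n-1}(E'_\bullet)=H^{n,1}_{c,cur}(D)$, while $H^{0,n-1}(D)=H^{0,n-1}_\infty(D)$ by the Dolbeault isomorphism recalled at the end of Section 2. For (ii), with Lipschitz boundary, take $(\ci_{0,\bullet}(\ol D),\opa)$; since $(\ci(\ol D))'=\ec'_{\ol D}(X)$ in the Lipschitz case, the dual homology is $H^{n,1}_{c,cur}(\ol D)$. For (iv), with rectifiable boundary, take $(L^2_{0,\bullet}(D),\opa_s)$, whose dual is $(L^2_{n,n-\bullet}(X,\ol D),\opa_{\wt c})$ with homology $H^{n,1}_{\wt c,L^2}(D)$. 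For (v), with Lipschitz boundary, view $(\check\dc'^{0,\bullet}(X),\opa)$ as a cohomological complex in its own right; reflexivity of the Fr\'echet--Schwartz space $\dc_{\ol D}(X)$ gives the dual $(\dc^{n,n-\bullet}_{\ol D}(X),\opa)$, whose $(n-1)$-th homology is $H^{n,1}_{c,\infty}(\ol D)$.

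The only slightly less direct case is (iii). Taking $E^\bullet=(L^2_{0,\bullet}(D),\opa)$ (weak maximal realization), the dual is $(L^2_{n,n-\bullet}(D),\opa_c)$ (strong minimal realization), and the required vanishing is of $H^{n,1}_{c,L^2}(D)$; Lemma \ref{supp} supplies only $H^{n,1}_{\wt c,L^2}(D)=0$. One therefore invokes Lemma 2.4 -- valid precisely under the Lipschitz hypothesis via the identity $(\opa_c f)^0=\opa(f^0)$ -- to identify the two, giving $H^{n,1}_{c,L^2}(D)\cong H^{n,1}_{\wt c,L^2}(D)=0$. The main obstacle throughout is just this bookkeeping of bidegrees and of the four realizations of $\opa$; once the dual complex has been correctly matched in each case, Corollary \ref{nonsep} finishes the proof.
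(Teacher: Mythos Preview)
Your proposal is correct and takes exactly the same approach as the paper, which states the theorem as a direct consequence of Corollary \ref{nonsep} and Lemma \ref{supp}; you have correctly spelled out the bookkeeping of dual complexes for each of the five cases, including the use of Lemma 2.4 (the $\opa_c$/$\opa_{\wt c}$ identification under the Lipschitz hypothesis) needed to bridge (iii).
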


 \begin{cor}\label{c2}
 Let  $D$ be a relatively compact open subset of $\cb^2$ such that $\cb^2\setminus D$ is connected, then
 either $D$ is pseudoconvex or $H^{0,1}(D)$ is not Hausdorff. If moreover the boundary of $D$ is Lipschitz, then
  either $D$ is pseudoconvex or $H^{0,1}_{L^2}(D)$ is not Hausdorff.
 \end{cor}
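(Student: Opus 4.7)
The plan is a direct application of Theorem \ref{sep} with $X = \cb^2$ and $n = 2$. I would begin by verifying the hypothesis of that theorem: $H^{n,1}_c(X) = H^{2,1}_c(\cb^2) = 0$. This vanishing is classical; by the Dolbeault isomorphism together with Serre duality on the Stein manifold $\cb^2$, the group $H^{2,1}_c(\cb^2)$ is the strong dual of $H^{0,1}(\cb^2)$, which vanishes since $\cb^2$ is Stein.

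Parts (i) and (iii) of Theorem \ref{sep} then deliver the two required dichotomies: either $H^{0,1}(D) = 0$ or $H^{0,1}(D)$ is not Hausdorff, and, under the Lipschitz hypothesis, either $H^{0,1}_{L^2}(D) = 0$ or $H^{0,1}_{L^2}(D)$ is not Hausdorff. It remains only to identify the vanishing alternative with pseudoconvexity of $D$. For the $L^2$ statement, this is Theorem \ref{thm2} of the paper (as announced in the introduction). For the smooth statement, I would invoke the classical solution of the Levi problem in $\cb^2$: Malgrange's top-degree vanishing yields $H^{0,2}(D) = 0$ on every open subset of a $2$-dimensional complex manifold, so together with $H^{0,1}(D) = 0$ we obtain $H^q(D,\oc) = 0$ for every $q \geq 1$. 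Chasing these vanishings through a Hilbert syzygy resolution of an arbitrary coherent analytic sheaf $\fc$ gives $H^q(D,\fc) = 0$ for every such $\fc$ and every $q \geq 1$, and the Cartan--Serre criterion concludes that $D$ is Stein, hence pseudoconvex.

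The main obstacle is this last identification of cohomological vanishing with pseudoconvexity, which depends on inputs external to the dual-complex machinery of Section 2 (Malgrange's top-degree vanishing together with Hilbert syzygy and Cartan--Serre--Levi in the smooth case, and the separate Theorem \ref{thm2} in the $L^2$ case). Once these inputs are in place, the corollary follows by routine specialization of Theorem \ref{sep} to $n=2$.
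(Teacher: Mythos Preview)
Your smooth case is fine and in fact spells out what the paper merely quotes: the classical equivalence ``$D\subset\cb^n$ is pseudoconvex $\Leftrightarrow H^{0,q}(D)=0$ for $1\le q\le n-1$'' is invoked directly in the paper, whereas you reconstruct the needed direction via Malgrange's top-degree vanishing, syzygies, and the Cartan--Serre criterion. That is a legitimate (if heavier) route.

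The $L^2$ case, however, is circular as written. Theorem~\ref{thm2} is not an independent input you can cite: it appears \emph{after} Corollary~\ref{c2} in the paper, carries no separate proof, and is nothing more than the contrapositive restatement of the $L^2$ half of the very corollary you are proving (Hausdorff cohomology being equivalent to closed range of $\bar\partial$). What you actually need is the implication
\[
H^{0,1}_{L^2}(D)=0 \ \Longrightarrow\ D \text{ pseudoconvex}
\]
for a bounded $D\subset\cb^2$ with Lipschitz boundary. The paper supplies this by citing H\"ormander's $L^2$ theory for the forward direction and the result recorded in \cite{Fu} (valid whenever $\text{interior}(\ol D)=D$, in particular for Lipschitz $D$) for the converse. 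You should replace the appeal to Theorem~\ref{thm2} by this characterization; once that is done your argument matches the paper's.
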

 \begin{proof} The space $\cb^n$ satisfies $H^{n,1}_c(\cb^n)=0$ when $n\ge 2$. From  the characterization of pseudoconvexity for open subset of $\cb^n$ in terms of vanishing of the Dolbeault cohomology,   an open subset $D$ of $\cb^n$ is pseudoconvex if and only if $H^{0,q}(D)=0$ for all $1\leq q\leq n-1$.
 Thus the first part of the theorem is a consequence of Theorem \ref{sep}.

 On the other hand, if   $D\subset\subset \cb^n$ is bounded pseudoconvex, then $H^{0,q}_{L^2}(D)=0$ for all $1\leq q\leq n-1$ by H\"ormander $L^2$-theory. The converse is also true provided  $D$ has Lipschitz boundary or more generally,
 $D$ satisfies $\text{interior}(\ol D)=D$ (see e.g. the remark at the end of the paper in \cite{Fu}).
  \end{proof}

Note that the first assertion of Corollary \ref{c2} was already proved by
Trapani (\cite{Tr1}, Theorem 2), where a characterization of Stein
domains in a Stein manifold of complex dimension $2$ is given.

  \begin{thm}\label{thm2}
 Let  $D$ be a relatively compact open subset of $\cb^2$ such that $\cb^2\setminus D$ is connected. Suppose $D$
 is not pseudoconvex.   Then  $\bar\partial ~:~ \ci(D)\to \ci_{0,1}(D)$ does not have closed range.   If moreover the boundary of $D$ is Lipschitz, then  $\bar\partial ~:~ L^2(D)\to L^2_{0,1}(D)$ does not have closed range either.
 \end{thm}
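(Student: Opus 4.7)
The plan is to observe that this theorem is essentially a direct reformulation of Corollary \ref{c2} via the general equivalence between the Hausdorff property of a cohomology quotient and the closed-range property of the defining differential. For any closed linear map $d^q\colon E^q\to E^{q+1}$ in the sense of the complexes introduced at the start of Section 2, one has that $\ke d^{q+1}$ is closed in $E^{q+1}$, so the quotient topology on $H^{q+1}(E^\bullet)=\ke d^{q+1}/\im d^q$ is Hausdorff if and only if $\im d^q$ is closed in $E^{q+1}$. This is exactly the statement that the quotient of a topological vector space by a subspace is Hausdorff iff that subspace is closed.

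First I would apply this equivalence to the smooth Dolbeault complex $(\ec^{0,\bullet}(D),\opa)$. Here $\opa$ is continuous between the Fr\'echet spaces $\ec(D)$ and $\ec_{0,1}(D)$, and the closed-range property of $\opa\colon\ec(D)\to\ec_{0,1}(D)$ is therefore equivalent to the Hausdorff property of $H^{0,1}_\infty(D)=H^{0,1}(D)$. Since $D\subset\cb^2$ is assumed not to be pseudoconvex, the first half of Corollary \ref{c2} immediately yields that $H^{0,1}(D)$ is not Hausdorff, and hence that $\opa\colon\ec(D)\to\ec_{0,1}(D)$ does not have closed range.

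For the $L^2$ statement I would repeat the same argument with the complex $(L^2_{0,\bullet}(D),\opa)$, where $\opa$ is the weak maximal (closed, densely defined, unbounded) realization used throughout Section 2. Again the Hausdorff property of $H^{0,1}_{L^2}(D)$ is equivalent to $\im\opa$ being closed in $L^2_{0,1}(D)$; the only extra point to check, which is immediate, is that $\ke\opa$ at level $(0,1)$ is closed in $L^2_{0,1}(D)$ precisely because $\opa$ is a closed operator. Under the added hypothesis that $\pa D$ is Lipschitz, the second half of Corollary \ref{c2} gives that $H^{0,1}_{L^2}(D)$ is not Hausdorff, and we conclude that $\opa\colon L^2(D)\to L^2_{0,1}(D)$ does not have closed range either. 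There is no substantive obstacle here: the theorem is simply a translation of Corollary \ref{c2} into the language of closed range, the only subtlety being the minor bookkeeping for the unbounded $L^2$ operator.
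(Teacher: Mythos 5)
Your argument is correct and matches the paper's intent: Theorem \ref{thm2} is stated there without a separate proof precisely because it is the closed-range reformulation of Corollary \ref{c2}, using the standard fact that, since $\ke\opa$ is closed at level $(0,1)$ (in both the Fr\'echet and the $L^2$ weak-maximal settings), Hausdorffness of the quotient $H^{0,1}$ is equivalent to closedness of $\im\opa$. Your bookkeeping for the unbounded $L^2$ realization is the only point requiring care, and you handle it correctly.
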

Next we will compare the cohomologies with smooth data up to the boundary.
For pseudoconvex open subsets of $\cb^2$ with Lipschitz boundary, Theorem \ref{sep} implies that we have either $H^{0,1}_\infty(\ol D)=0$ or $H^{0,1}_\infty(\ol D)$ is not separated. By the classical Kohn's theorem (\cite{Ko}) for the $\opa$-problem on pseudoconvex domains with $\ci$-smooth boundary, we have that $H^{0,1}_\infty(\ol D)=0$, when moreover $D$ has $\ci$-smooth boundary and by Dufresnoy's result (\cite{Du}) on the $\opa$-problem for differentiable forms in the sense of Whitney, we have that $H^{0,1}_\infty(\ol D)=0$, when moreover $\ol D$ admits a
sufficiently nice  Stein neighborhood basis.

 Next we will show that there exists an example of pseudoconvex domain $D$  in $\cb^2$ such that
 the
cohomology group $H^{0,1}_\infty(\ol D)$ is infinite dimensional.
Let $T$ be  the Hartogs triangle   in $\cb^2$
$$T=\{(z,w)\in\cb^2~|~|z|<|w|<1\}$$
which is pseudoconvex, hence $ H^{0,1}_\infty(T)= 0$.

It follows from a paper by J. Chaumat and A.-M. Chollet
\cite{ChChHartogs} that for any $\zeta$ in the bidisc
$P=\Delta\times\Delta$ and $\zeta\in P\setminus \ol T$,  their exists
a $\ci$-smooth, $\opa$-closed $(0,1)$-form $\alpha_\zeta$ defined in
$\cb^2\setminus \{\zeta\}$ such that there does not exist any
$\ci$-smooth function $\beta$ on $\ol T$ such that $\opa
\beta=\alpha_\zeta$. In particular the $\opa$-equation $\opa
u=\alpha_\zeta$ cannot be solved in the $\ci$-smooth category in any
neighborhood of $\ol T$. Since by an argument due to Laufer \cite{lauf},
the Dolbeault cohomology group $H^{0,1}_\infty(\ol T)$ is either zero
or infinite dimensionnal, we can conclude 
\begin{prop}The
cohomology group $H^{0,1}_\infty(\ol T)$ is infinite dimensional.
\end{prop}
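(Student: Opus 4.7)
The proof divides naturally into two steps: producing a single nonzero class in $H^{0,1}_\infty(\ol T)$, and then using a general dichotomy to promote this to infinite dimensionality.

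For the first step, I would fix a point $\zeta \in P \setminus \ol T$, for instance $\zeta = (1/2, 0)$ (which lies in the bidisc $P$ but fails $|z| < |w|$, so sits outside $\ol T$). Since $\zeta \notin \ol T$, the form $\alpha_\zeta$ furnished by Chaumat--Chollet \cite{ChChHartogs} is smooth and $\opa$-closed on some open neighborhood of $\ol T$ in $\cb^2$; its restriction therefore belongs to the kernel of $\opa$ acting on $\ci_{0,1}(\ol T)$. The non-solvability statement in \cite{ChChHartogs} says precisely that no $\beta \in \ci(\ol T)$ satisfies $\opa \beta = \alpha_\zeta$, so $[\alpha_\zeta]$ is a nonzero class in $H^{0,1}_\infty(\ol T)$. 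In particular, $H^{0,1}_\infty(\ol T) \neq 0$.

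For the second step, I would invoke the dichotomy of Laufer \cite{lauf}: for a relatively compact pseudoconvex domain $D$ in a Stein manifold, the Dolbeault cohomology $H^{0,1}_\infty(\ol D)$ is either zero or infinite-dimensional. The intuition is that finite dimensionality would force $\opa$ to have closed range on $\ci(\ol T)$ (a finite-dimensional quotient of a Fr\'echet space is automatically Hausdorff, hence the image would be closed); combined with interior solvability $H^{0,1}(T) = 0$ coming from the pseudoconvexity of $T$, and with an open-mapping argument, one would obtain uniform estimates for smooth solutions of $\opa u = \alpha$ on $\ol T$ that contradict the singular behavior allowed by the Chaumat--Chollet construction as $\zeta$ is pushed toward $\pa T$. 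Applying this dichotomy together with the first step yields $\dim H^{0,1}_\infty(\ol T) = \infty$.

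The main obstacle is importing Laufer's dichotomy in the Whitney smooth-up-to-the-boundary setting rather than the open-manifold setting in which it is classically stated. One must verify that the Fr\'echet space structure on $\ci(\ol T)$, the quotient topology on $H^{0,1}_\infty(\ol T)$, and the interaction between $\ci(\ol T)$ and sections on shrinking neighborhoods of $\ol T$ are all compatible with Laufer's Baire-category reduction; the Hartogs triangle is precisely the test case where $\ol T$ fails to have a Stein neighborhood basis, so the extension to Whitney forms is delicate. Once this is granted, the single non-triviality extracted from any one $\alpha_\zeta$ is automatically upgraded to infinite dimensionality, without having to build explicit linearly independent classes out of the family $\{\alpha_\zeta\}_{\zeta \in P \setminus \ol T}$.
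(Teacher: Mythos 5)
Your proposal is essentially the paper's proof: a nonzero class obtained from the Chaumat--Chollet non-solvability result (any $\zeta\in P\setminus\ol T$ works), promoted to infinite dimensionality by Laufer's zero-or-infinite-dimensional dichotomy, which the paper likewise invokes for $H^{0,1}_\infty(\ol T)$ without re-proving it in the $\ci(\ol T)$ setting. One caveat, which does not affect the verdict since you offered it only as intuition: your sketch of the mechanism behind the dichotomy (closed range, open mapping, uniform estimates contradicting singular behavior as $\zeta\to\pa T$, a ``Baire-category reduction'') is not Laufer's argument, which is algebraic --- it uses the module structure of the cohomology over holomorphic functions, multiplying a nonzero class by powers of a coordinate function so that finite dimensionality yields a polynomial relation $P(z_1)\,[\alpha]=0$, from which one derives that the class itself vanishes.
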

 As the boundary of $T$ is not Lipschitz we cannot apply Theorem \ref{sep} and we do not know if this group is Hausdorff or not.
 But if we consider the strong $L^2(D)$-cohomology then either $H^{0,1}_{\opa_s,L^2}(T)=0$ or  $H^{0,1}_{\opa_s,L^2}(T)$ is not Hausdorff since the boundary of $T$ is rectifiable.

\section{Cohomology in an annulus}\label{s4}

Let $X$ be a non-compact  $n$-dimensional complex manifold and $D\subset\subset   X$ be a relatively compact subset  of $X$ such that $X\setminus D$ is connected. We will first study the relations between the Dolbeault cohomology groups of $D$ and some other Dolbeault cohomology groups of $X\setminus \ol D$.

\begin{prop}\label{anneau-ouvert}
Let $X$ be a non-compact complex manifold of complex dimension $n\geq 2$ and $D$ be a relatively compact domain in $X$ such that $X\setminus D$ is connected. Assume $H^{n,1}_c(X)=0$ and $H^{0,n-1}(X)=0$. Then if $H_{c,\infty}^{0,n}(X\setminus\ol D)$ is Hausdorff, for any neighborhood $U_{\ol D}$ of $\ol D$ such that $X\setminus U_{\ol D}$ is connected, there exists a neighborhood $V_{\ol D}$ of $\ol D$ such that $V_{\ol D}\subset\subset U_{\ol D}$ and for each $f\in \ec^{0,n-1}(U_{\ol D})$ such  that $\opa f=0$, there exists $u\in \ec^{0,n-2}(V_{\ol D})$ such  that $\opa u=f$ on $V_{\ol D}$.
\end{prop}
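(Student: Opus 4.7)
My plan is to extend $f$ globally by a cut-off, reduce solvability to the vanishing of a compactly supported cohomology class $[\omega]\in H^{0,n}_{c,\infty}(X\setminus\ol D)$, deduce that vanishing from Serre duality combined with a Hartogs-type extension of holomorphic $n$-forms across $\ol D$, and then combine with $H^{0,n-1}(X)=0$ to assemble a solution on a fixed neighborhood of $\ol D$. Concretely, I would first fix nested neighborhoods $\ol D\subset V_1\subset\subset V_2\subset\subset U_{\ol D}$ and a cut-off $\chi\in\dc(V_2)$ with $\chi\equiv 1$ on a neighborhood of $\ol{V_1}$. For any $\opa$-closed $f\in\ec^{0,n-1}(U_{\ol D})$, set $\omega:=\opa(\chi f)=\opa\chi\wedge f$; extended by zero this is a smooth $\opa$-closed $(0,n)$-form on $X$ with support in the fixed compact set $K_0:=\supp(\opa\chi)\subset V_2\setminus\ol{V_1}\subset X\setminus\ol D$. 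Since $H^{0,n}_{c,\infty}(X\setminus\ol D)$ is Hausdorff, Theorem \ref{Sdual} lets us test $[\omega]$ by integration against holomorphic $n$-forms $\alpha$ on $X\setminus\ol D$.

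Given such an $\alpha$, the hypothesis $H^{n,1}_c(X)=0$ and the connectedness of $X\setminus D$ allow us to extend $\alpha$ to a holomorphic $n$-form $\wt\alpha$ on $X$, by exactly the Hartogs scheme used in Lemma \ref{supp}: cut off $\alpha$ away from $\ol D$, correct the resulting compactly supported $\opa$-closed $(n,1)$-form using $H^{n,1}_c(X)=0$, then invoke analytic continuation. The $(n,n-1)$-form $\chi f\wedge\wt\alpha$ has compact support in $X$, is smooth, and satisfies $d(\chi f\wedge\wt\alpha)=\opa(\chi f\wedge\wt\alpha)=\opa\chi\wedge f\wedge\wt\alpha$, so Stokes' theorem gives $\int_X\opa\chi\wedge f\wedge\wt\alpha=0$. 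Since $\wt\alpha=\alpha$ on $\supp(\omega)\subset X\setminus\ol D$, this is exactly the Serre duality pairing $\int_{X\setminus\ol D}\omega\wedge\alpha$, and we conclude that $[\omega]=0$.

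To obtain $V_{\ol D}$ independent of $f$, I would use that the Hausdorff hypothesis is equivalent to the closed range of $\opa\colon\dc^{0,n-1}(X\setminus\ol D)\to\dc^{0,n}(X\setminus\ol D)$ between strict LF-spaces; the open mapping theorem (Grothendieck's factorization) then furnishes a fixed compact $K_1\subset X\setminus\ol D$, depending only on $K_0$, such that each such $\omega$ admits a primitive $h\in\dc^{0,n-1}(X\setminus\ol D)$ with $\supp(h)\subset K_1$. Pick $V_{\ol D}$ to be any open neighborhood of $\ol D$ with $\ol{V_{\ol D}}\subset V_1$ and $V_{\ol D}\cap K_1=\emptyset$. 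Then for each $f$ the form $\chi f-h$ is smooth and $\opa$-closed on $X$, so $H^{0,n-1}(X)=0$ delivers $u\in\ec^{0,n-2}(X)$ with $\opa u=\chi f-h$; on $V_{\ol D}$ we have $\chi\equiv 1$ and $h\equiv 0$, giving $\opa u=f$ and hence the desired solution. I expect the subtle point to be this uniform support control: while $[\omega]=0$ gives existence of some primitive for each individual $\omega$, producing a single $K_1$ (and thus a $V_{\ol D}$) that works for all $\opa$-closed $f$ at once is exactly the quantitative content of closed range in the LF-space setting.
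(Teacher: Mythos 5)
Your proposal is correct and takes essentially the same route as the paper's proof: cut off $f$, show the resulting compactly supported $(0,n)$-form pairs to zero against holomorphic $(n,0)$-forms on $X\setminus\ol D$ via Hartogs extension (using $H^{n,1}_c(X)=0$ and the connectedness of $X\setminus D$), use the Hausdorff hypothesis to obtain a compactly supported primitive whose support is controlled independently of $f$, and conclude with $H^{0,n-1}(X)=0$. The only differences are cosmetic: where you derive the uniform support of the primitive from closed range in LF-spaces plus the open mapping/Grothendieck argument, the paper simply cites Lemma 2.3 of \cite{LaLedualite}, and the duality testing step is really Proposition \ref{adh} (plus $\opa$-regularity) rather than Theorem \ref{Sdual} as labeled.
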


\begin{proof}
 Let $W_{\ol D}\subset\subset U_{\ol D}$ be a neighborhood of $\ol D$ and $\chi$ be a $\ci$-smooth function on $X$ with compact support in $U_{\ol D}$ and constant equal to $1$ on $W_{\ol D}$ and $f\in \ec^{0,n-1}(U_{\ol D})$ such  that $\opa f=0$.  Set $g=\opa(\chi f)$. The form $g$ is a $\opa$-closed $(0,n)$-form with compact support in $X\setminus\ol D$.

 First we want to prove that $g$  belongs to the closure of the image
 by $\opa$ of the $\ci$-smooth $(0,n-1)$-forms with compact support in
 $X\setminus\ol D$. By Proposition \ref{adh} and by the regularity of
 the $\opa$-operator in complex manifolds, it suffices  to prove that
 $\int_{X\setminus\ol D} g\wedge\theta =0$ for all holomorphic
 $(n,0)$-forms $\theta\in\ec^{n,0}(X\setminus\ol D)$. Since $X$ is not
 compact, $X\setminus D$ is connected and $H^{n,1}_c(X)=0$, by the Hartogs extension phenomenon, the holomorphic $(n,0)$-form $\theta$ extends to $X$ in a holomorphic $(n,0)$-form $\wt\theta$ and
 $$\int_{X\setminus\ol D} g\wedge\theta =\int_{X\setminus\ol D} \opa(\chi f)\wedge\wt\theta=\int_{X} \chi f\wedge\opa\wt\theta=0.$$

Then it follows from the Hausdorff property of the cohomological group
$H_c^{0,n}(X\setminus\ol D)$ that their exists a $(0,n-1)$-form $v$ of
class $\ci$ with compact support in $X\setminus\ol D$ such that $\opa
v=g$. Note that by Lemma 2.3 in \cite{LaLedualite} the support of $v$ depends only of the support of $g$ which in the present setting is only related to the choice of the function $\chi$. Consider now the $(0,n-1)$-form $\chi f-v$. After extension of $v$ by $0$ in $\ol D$ it is defined on the whole $X$, moreover $\opa (\chi f-v)=0$ on $X\setminus\ol D$ by definition of $v$ and  $\opa (\chi f-v)=0$ on a neighborhood of $\ol D$ since $\supp v\subset X\setminus\ol D$ and $\opa(\chi f)=\opa f=0$ on $W_{\ol D}$. Using that $H^{0,n-1}(X)=0$, we get a $\ci$-smooth $(0,n-2)$-form $h$ on $X$ such that $\chi f-v=\opa h$ in $X$, in particular $f=\opa h$ on $V_{\ol D}=W_{\ol D}\cap (X\setminus \supp v)$.
\end{proof}

Let us study the converse of Proposition \ref{anneau-ouvert}.

\begin{prop}\label{recip}
Let $X$ be a complex manifold of complex dimension $n$ such that $H^{0,n}_c(X)$ is Hausdorff and $D$ be a relatively compact domain in $X$. If for any neighborhood $U_{\ol D}$ of $\ol D$ there exists a neighborhood $V_{\ol D}$ of $\ol D$ such that $V_{\ol D}\subset\subset U_{\ol D}$ and for each $f\in \ec^{0,n-1}(U_{\ol D})$ such  that $\opa f=0$, there exists $u\in \ec^{0,n-2}(V_{\ol D})$ such  that $\opa u=f$ on $V_{\ol D}$, then $H_{c,\infty}^{0,n}(X\setminus\ol D)$ is Hausdorff.
\end{prop}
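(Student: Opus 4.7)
The goal is to show that the image of $\opa : \dc^{0,n-1}(X\setminus\ol D)\to \dc^{0,n}(X\setminus\ol D)$ is closed. My plan is to take $g$ in this closure, lift the problem to all of $X$ by extending by zero and using the Hausdorff assumption on $H^{0,n}_c(X)$ to obtain a globally compactly supported solution $u$, and then use the local solvability hypothesis near $\ol D$ together with a cutoff to push the support of $u$ off of $\ol D$.

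Concretely, let $g\in \dc^{0,n}(X\setminus\ol D)$ belong to the closure of $\opa(\dc^{0,n-1}(X\setminus\ol D))$. Since $g$ has compact support in $X\setminus\ol D$, extension by zero identifies it with an element of $\dc^{0,n}(X)$, and the extension-by-zero map $\dc^{0,n-1}(X\setminus\ol D)\hookrightarrow \dc^{0,n-1}(X)$ is continuous (for any compact $K\subset X\setminus\ol D$, $\dc^{0,n-1}_K(X\setminus\ol D) = \dc^{0,n-1}_K(X)$ as Fréchet spaces, so the induced map on the strict inductive limits is continuous) and commutes with $\opa$. Hence $g$ lies in the closure of $\opa(\dc^{0,n-1}(X))$ in $\dc^{0,n}(X)$. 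Since $H^{0,n}_c(X)$ is Hausdorff, this image is closed, and so $g = \opa u$ for some $u\in \dc^{0,n-1}(X)$.

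The support of $u$ may still meet $\ol D$, and this is where the hypothesis intervenes. Because $\supp g$ is a compact subset of $X\setminus\ol D$, there is an open neighborhood $U_{\ol D}$ of $\ol D$ on which $g$ vanishes, so $\opa u = 0$ on $U_{\ol D}$. Applying the hypothesis to $f := u|_{U_{\ol D}}\in \ec^{0,n-1}(U_{\ol D})$, I obtain a neighborhood $V_{\ol D}\subset\subset U_{\ol D}$ of $\ol D$ and $w\in \ec^{0,n-2}(V_{\ol D})$ with $\opa w = u$ on $V_{\ol D}$. I then pick a smaller neighborhood $W_{\ol D}\subset\subset V_{\ol D}$ of $\ol D$ and a cutoff $\chi\in \dc(V_{\ol D})$ with $\chi\equiv 1$ on $W_{\ol D}$, and set
$$u' := u - \opa(\chi w),$$
where $\chi w$ is extended by $0$ outside $V_{\ol D}$. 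Then $\opa u' = \opa u = g$ on $X$, while on $W_{\ol D}$ one has $\chi w = w$ and hence $u' = u - \opa w = 0$. So $\supp u'\subset X\setminus W_{\ol D}$ is compact and disjoint from $\ol D$, i.e.\ $u'\in \dc^{0,n-1}(X\setminus\ol D)$, which gives $g\in \im \opa$ and proves closedness.

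The main obstacle, beyond organizing the cutoff, is the transfer of the closure statement from $X\setminus\ol D$ to $X$; this is handled cleanly by the fact that extension by zero is a continuous map at each level $\dc_K$ and so passes to the strict inductive limits, but it must be done before invoking Hausdorffness of $H^{0,n}_c(X)$ so as to avoid a circular appeal to closed range on $X\setminus\ol D$.
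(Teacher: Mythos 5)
Your proof is correct, and its analytic heart -- solving $\opa u=g$ globally on $X$ from the Hausdorff hypothesis on $H^{0,n}_c(X)$, then using the local solvability hypothesis near $\ol D$ and a cutoff $\chi$ to replace $u$ by $u'=u-\opa(\chi w)$ supported away from $\ol D$ -- is exactly the correction argument in the paper's proof. Where you differ is in the functional-analytic packaging. The paper never shows directly that $\opa\dc^{0,n-1}(X\setminus\ol D)$ is closed in the inductive-limit space $\dc^{0,n}(X\setminus\ol D)$; it instead invokes Theorem 2.7 of \cite{LaLedualite} to reduce the Hausdorff property of $H^{0,n}_{c,\infty}(X\setminus\ol D)$ to closedness, for each compact $K\subset X\setminus\ol D$, of $\dc_K^{0,n}(X\setminus\ol D)\cap\opa\dc^{0,n-1}(X\setminus\ol D)$, and then proves the identity $\dc_K^{0,n}(X\setminus\ol D)\cap\opa\dc^{0,n-1}(X\setminus\ol D)=\dc_K^{0,n}(X\setminus\ol D)\cap\opa\dc^{0,n-1}(X)$ by the same cutoff device. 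You bypass that external theorem: since you work in top degree, every $(0,n)$-form is $\opa$-closed, so Hausdorffness of the factor topology is literally closedness of the image of $\opa$ in the whole space, both on $X$ (which you use as the hypothesis) and on $X\setminus\ol D$ (which you prove); and your observation that extension by zero agrees with the identity on each $\dc_K$ and hence is continuous on the strict inductive limits correctly transports an arbitrary element of the closure (no sequences, so no metrizability issue) into $\overline{\opa\dc^{0,n-1}(X)}=\opa\dc^{0,n-1}(X)$. The trade-off: your version is more self-contained and elementary, but it leans on the top-degree degeneration $\ke\opa=\dc^{0,n}$, whereas the paper's reduction to the trace condition is the formulation that survives in degrees where the kernel is a proper subspace and where closedness in an LF-space cannot be read off so directly. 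Two small points of wording: it is worth saying explicitly that closedness of the image in $\dc^{0,n}(X\setminus\ol D)$ gives closedness inside $\ke\opa$ and hence the Hausdorff property of the quotient, and that $\supp u'$ is a compact subset of $X$ contained in the closed set $X\setminus W_{\ol D}$, hence a compact subset of the open set $X\setminus\ol D$ (the set $X\setminus W_{\ol D}$ itself is of course not compact).
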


\begin{proof}
By Theorem 2.7 in \cite{LaLedualite} it is sufficient to prove that for each compact set $K\subset X\setminus\ol D$ the space $\dc_K^{0,n}(X\setminus\ol D)\cap\opa\dc^{0,n-1}(X\setminus\ol D)$ is topologically closed in the space $\dc^{0,n}(X\setminus\ol D)$, where $\dc_K^{0,n}(X\setminus\ol D)$ denotes the space of $\ci$-smooth $(0,n)$-forms on $X\setminus\ol D$ with support in $K$. Let $K$ be a fixed compact subset of $X\setminus\ol D$.

First we will prove that
$$\dc_K^{0,n}(X\setminus\ol D)\cap\opa\dc^{0,n-1}(X\setminus\ol D)=\dc_K^{0,n}(X\setminus\ol D)\cap\opa\dc^{0,n-1}(X).$$
It is clear that
$$\dc_K^{0,n}(X\setminus\ol D)\cap\opa\dc^{0,n-1}(X\setminus\ol D)\subset\dc_K^{0,n}(X\setminus\ol D)\cap\opa\dc^{0,n-1}(X).$$
For the converse inclusion let $f\in\dc_K^{0,n}(X\setminus\ol D)\cap\opa\dc^{0,n-1}(X)$, then $f=\opa g$ with $g\in \dc^{0,n-1}(X)$. Since $\supp f\subset K$, the form $g$ is $\opa$-closed on some neighborhood $U_{\ol D}$ of $\ol D$. From the hypothesis we get that there exists a neighborhood $V_{\ol D}$ of $\ol D$ such that $V_{\ol D}\subset\subset U_{\ol D}$ and a form $h\in \ec^{0,n-2}(V_{\ol D})$ such  that $\opa h=g$ on $V_{\ol D}$. Choose $\chi$ a $\ci$-smooth function on $X$ with compact support in $V_{\ol D}$ and such that $\chi=1$ on a neighborhood of $\ol D$, then $f=\opa(g-\opa(\chi h))$ and $\supp(g-\opa(\chi h))$ is a compact subset of $X\setminus\ol D$.

The Hausdorff property of the cohomological group $H^{0,n}_c(X)$ implies that for each compact set $K\subset X$ the space $\dc_K^{0,n}(X)\cap\opa\dc^{0,n-1}(X)$ is topologically closed in the space $\dc^{0,n}(X)$. Since $K\subset X\setminus\ol D$ we get that $\dc_K^{0,n}(X\setminus\ol D)\cap\opa\dc^{0,n-1}(X)$ is topologically closed in the space $\dc^{0,n}(X\setminus\ol D)$, which ends the proof of the proposition.
\end{proof}

Note that under the hypothesis of Proposition \ref{recip}, we get that if $\ol D$ admits a Stein neighborhood basis then $H_{c,\infty}^{0,n}(X\setminus\ol D)$ is Hausdorff (A slightly stronger result  is proved in section 4 of \cite{LaLedualite}).

\begin{cor}\label{cor4.3}
 Let $X$ be a Stein manifold of complex dimension $n\geq 2$ and $D$ be a relatively compact domain in $X$ such that $X\setminus D$ is connected. Then $H^{n,1}_\infty(X\setminus\ol D)$ is Hausdorff if and only if for any neighborhood $U_{\ol D}$ of $\ol D$ such that $X\setminus U_{\ol D}$ is connected, there exists a neighborhood $V_{\ol D}$ of $\ol D$ such that $V_{\ol D}\subset\subset U_{\ol D}$ and for each $f\in \ec^{0,n-1}(U_{\ol D})$ such  that $\opa f=0$, there exists $u\in \ec^{0,n-2}(V_{\ol D})$ such  that $\opa u=f$
 \end{cor}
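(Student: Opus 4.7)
The strategy is to reduce the statement to the pair of Propositions \ref{anneau-ouvert} and \ref{recip} via Serre duality, once we verify that the Stein hypothesis supplies all the cohomological inputs required by those propositions. First I would apply Theorem \ref{Sdual} to the open set $\Omega := X\setminus\ol D$ with $p=n$ and $q=1$: this gives that $H^{n,1}_\infty(X\setminus\ol D)$ is Hausdorff if and only if $H^{0,n}_{c,\infty}(X\setminus\ol D)$ is Hausdorff. Thus the corollary is equivalent to saying that, under the stated hypotheses, the Hausdorff property of $H^{0,n}_{c,\infty}(X\setminus\ol D)$ is equivalent to the stated $\opa$-solvability on shrinking neighborhoods of $\ol D$.

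Next I would check that the ambient hypotheses needed in Propositions \ref{anneau-ouvert} and \ref{recip} are automatic when $X$ is Stein of dimension $n\geq 2$. Since $X$ is Stein, Cartan's Theorem B yields $H^{0,q}(X)=0$ for every $q\geq 1$, in particular $H^{0,n-1}(X)=0$. The compactly supported vanishing $H^{n,1}_c(X)=0$ also holds on a Stein manifold when $n\geq 2$: it is the classical dual/Serre statement (the $\opa$-operator on $(n,0)$-currents with compact support has closed range with trivial first cohomology). Similarly, $H^{0,n}_c(X)$ is Hausdorff on a Stein manifold because by Serre duality it pairs with the Fr\'echet--Schwartz space $H^{n,0}(X)$ of holomorphic $n$-forms, and the corresponding closed-range statement for $\opa$ on compactly supported $(0,n-1)$-forms is standard on Stein manifolds. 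These facts are exactly the hypotheses appearing in Propositions \ref{anneau-ouvert} and \ref{recip}.

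With these verifications in place, the two implications are immediate. For the direct implication, assume $H^{n,1}_\infty(X\setminus\ol D)$ is Hausdorff. By Serre duality $H^{0,n}_{c,\infty}(X\setminus\ol D)$ is Hausdorff, and Proposition \ref{anneau-ouvert} applies (using $H^{n,1}_c(X)=0$ and $H^{0,n-1}(X)=0$) to produce, for every admissible neighborhood $U_{\ol D}$, an inner neighborhood $V_{\ol D}$ on which each $\opa$-closed $f\in\ec^{0,n-1}(U_{\ol D})$ can be solved. Conversely, assume the solvability condition. Since $H^{0,n}_c(X)$ is Hausdorff, Proposition \ref{recip} shows that $H^{0,n}_{c,\infty}(X\setminus\ol D)$ is Hausdorff, and Serre duality once again returns the Hausdorff property of $H^{n,1}_\infty(X\setminus\ol D)$.

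The only real point requiring care is the verification that the Stein assumption delivers \emph{both} the vanishing results demanded by Proposition \ref{anneau-ouvert} and the Hausdorff hypothesis demanded by Proposition \ref{recip}; once these are recorded, the corollary is a formal concatenation of Theorem \ref{Sdual} with the two propositions, and no new analytic work is needed.
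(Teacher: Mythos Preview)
Your proof is correct and follows essentially the same route as the paper: verify that the Stein assumption supplies the cohomological hypotheses of Propositions \ref{anneau-ouvert} and \ref{recip}, and then invoke Serre duality (Theorem \ref{Sdual}) on $X\setminus\ol D$ to pass between $H^{n,1}_\infty(X\setminus\ol D)$ and $H^{0,n}_{c,\infty}(X\setminus\ol D)$. The paper's proof is a one-sentence version of exactly this argument; your additional remarks on why $H^{n,1}_c(X)=0$, $H^{0,n-1}(X)=0$, and $H^{0,n}_c(X)$ is Hausdorff on a Stein manifold simply make explicit what the paper leaves implicit.
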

 \begin{proof}
 If  $X$ is a Stein manifold of complex dimension $n\geq 2$ and $D$ be a relatively compact domain in $X$ such that $X\setminus D$ is connected, the hypotheses of Proposition \ref{anneau-ouvert} and  Proposition \ref{recip} are fulfilled  and moreover, by the Serre duality in $X\setminus\ol D$, $H_{c,\infty}^{0,n}(X\setminus\ol D)$ is Hausdorff if and only if $H^{n,1}_\infty(X\setminus\ol D)$ is Hausdorff.
 \end{proof}

Corollary \ref{cor4.3} was already proved in \cite{Tr1} as a corollary
to a
result related to Propositions \ref{anneau-ouvert} and \ref{recip} (see
\cite{Tr1}, Lemmas 8 and 9).
\medskip

In the same setting, we are going to consider now the case of the cohomology of closed subsets.

\begin{prop}\label{anneau-ferme}
Let $X$ be a non compact complex manifold of complex dimension $n\geq 2$ and $D$ be a relatively compact domain in $X$ with Lipschitz boundary and such that $X\setminus D$ is connected. Assume $H^{n,1}_c(X)=0$, $H^{0,n-1}(X)=0$ and $H^{0,n}_c(X)$ are Hausdorff, then $H_{c,\infty}^{0,n}(X\setminus D)$ is Hausdorff if and only if $H^{0,n-1}_\infty(\ol D)=0$.
\end{prop}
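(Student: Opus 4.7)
The plan is to prove the two implications separately, using the duality machinery from Section 2 together with Propositions \ref{anneau-ouvert} and \ref{recip} as templates. The chief new ingredient is that the supports are in the closed set $X\setminus D$ rather than the open set $X\setminus\ol D$, which forces one to work with Whitney-smooth forms on $\ol D$ and to use the Lipschitz boundary hypothesis, so that a compactly supported smooth form on $X$ has support in $X\setminus D$ exactly when it vanishes to infinite order on $\ol D$.

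For the implication $(\Leftarrow)$, assuming $H^{0,n-1}_\infty(\ol D)=0$, I would invoke the Hausdorff criterion (Theorem 2.7 of \cite{LaLedualite}) and reduce to proving that for every compact $K\subset X\setminus D$,
$$\dc^{0,n}_K(X)\cap\opa\dc^{0,n-1}_{X\setminus D}(X)\;=\;\dc^{0,n}_K(X)\cap\opa\dc^{0,n-1}(X),$$
so that the Hausdorff property of $H^{0,n}_c(X)$ closes the right-hand side in $\dc^{0,n}_K(X)$. The nontrivial inclusion $\supset$ is handled as in Proposition \ref{recip}: if $f=\opa g$ with $g\in\dc^{0,n-1}(X)$ and $\supp f\subset K\subset X\setminus D$, then $g|_{\ol D}$ is a $\opa$-closed Whitney form, so the assumption produces $u\in\ci_{0,n-2}(\ol D)$ with $\opa u=g|_{\ol D}$; extending $u$ to $\wt u\in\ec^{0,n-2}(X)$ and multiplying by a cutoff $\chi\equiv 1$ near $\ol D$, one replaces $g$ by $g-\opa(\chi\wt u)$, which still has $\opa$-image equal to $f$ and now has support in $X\setminus D$.

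For the implication $(\Rightarrow)$, take a $\opa$-closed $f\in\ci_{0,n-1}(\ol D)$, choose a compactly supported extension $\wt f\in\dc^{0,n-1}(X)$, and set $g=\opa\wt f$; since $\opa f=0$ in the Whitney sense, $g\in\dc^{0,n}_{X\setminus D}(X)$ and is $\opa$-closed. The crux is to show $g\in\opa\dc^{0,n-1}_{X\setminus D}(X)$. For this I would apply Proposition \ref{adh} to the complex $(\dc^{0,\bullet}_{X\setminus D}(X),\opa)$: elements of its strong dual are represented by $(n,n-\bullet)$-currents on $X$, with two representatives defining the same functional iff their difference is supported in $\ol D$ (test forms in $\dc^{0,\bullet}_{X\setminus D}(X)$ vanish to infinite order on $\ol D$). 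Hence $\ker d'_{n-1}$ is represented by $(n,0)$-currents $\theta$ whose $\opa\theta$ is supported in $\ol D$; on $X\setminus\ol D$ such a $\theta$ is a holomorphic form by elliptic regularity, and $H^{n,1}_c(X)=0$ combined with the connectedness of $X\setminus D$ yields a holomorphic extension $\wt\theta\in\ec^{n,0}(X)$ exactly as in Lemma \ref{supp}. Stokes then gives $\langle g,\wt\theta\rangle=\int_X\opa\wt f\wedge\wt\theta=\pm\int_X\wt f\wedge\opa\wt\theta=0$, so Proposition \ref{adh} places $g$ in the closure of $\opa\dc^{0,n-1}_{X\setminus D}(X)$, and the Hausdorff assumption turns this closure into the image itself. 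Writing $g=\opa h$ with $h\in\dc^{0,n-1}_{X\setminus D}(X)$, the difference $\wt f-h$ is a $\opa$-closed smooth form on $X$, so $H^{0,n-1}(X)=0$ produces $u\in\ec^{0,n-2}(X)$ with $\wt f-h=\opa u$, and restriction to $\ol D$ gives $f=\opa(u|_{\ol D})$ because $h$ vanishes on $\ol D$ in the Whitney sense.

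The main obstacle is the identification of $\ker d'_{n-1}$: one must combine the elliptic regularity of $\opa$ with the Hartogs extension across $\ol D$ permitted by $H^{n,1}_c(X)=0$, and also observe that changing the representing current by something supported in $\ol D$ does not change the pairing with forms in $\dc^{0,n}_{X\setminus D}(X)$ (which vanish to infinite order there). Once this kernel is pinned down, the integration-by-parts computation and the closed-range assumption do the rest.
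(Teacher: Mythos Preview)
Your argument is correct and tracks the paper's proof closely; the only real difference is in the packaging of the $(\Leftarrow)$ direction.

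For $(\Rightarrow)$ your proof and the paper's are identical: extend $f$ to $\wt f$, show $\opa\wt f$ is orthogonal to the $\opa$-closed elements of the dual (identified, via regularity and Hartogs, with holomorphic $(n,0)$-forms on $X$), invoke the Hausdorff hypothesis to write $\opa\wt f=\opa h$ with $\supp h\subset X\setminus D$, and use $H^{0,n-1}(X)=0$ on $\wt f-h$. The paper phrases the dual space directly as $\check\dc'^{n,0}(X\setminus\ol D)$ rather than as currents on $X$ modulo those supported in $\ol D$, but these are the same object by Lemma~2.3, and your remark that forms in $\dc^{0,n}_{X\setminus D}(X)$ vanish to infinite order on $\ol D$ is exactly what makes that identification work.

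For $(\Leftarrow)$ the paper argues directly via Proposition~\ref{adh}: it takes $f\in\dc^{0,n}_{X\setminus D}(X)$ orthogonal to the holomorphic $(n,0)$-forms on $X\setminus\ol D$, observes this makes $f$ orthogonal to the holomorphic forms on $X$, uses the Hausdorff property of $H^{0,n}_c(X)$ to write $f=\opa g$ with $g\in\dc^{0,n-1}(X)$, and then performs the same correction you do (solve $g|_{\ol D}=\opa h$ and subtract $\opa\wt h$). You instead follow the template of Proposition~\ref{recip}, invoking the closedness criterion of \cite{LaLedualite}, Theorem~2.7, and reducing to the set equality $\dc^{0,n}_K(X)\cap\opa\dc^{0,n-1}_{X\setminus D}(X)=\dc^{0,n}_K(X)\cap\opa\dc^{0,n-1}(X)$. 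Both routes hinge on the same correction step; the paper's version is marginally more self-contained since it avoids the external closedness criterion, while yours makes the parallel with Proposition~\ref{recip} explicit.
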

\begin{proof}
Assume $H_{c,\infty}^{0,n}(X\setminus D)$ is Hausdorff. Let
$f\in\ci_{0,n-1}(\ol D)$ be a $\opa$-closed form, then if $\wt f$ is a
$\ci$-smooth extension with compact support of $f$ to $X$, the
$(0,n)$-form $\opa\wt f$ has compact support in $X\setminus D$ and
satisfies $<T,\opa\wt f>=0$ for all
$T\in\check\dc'^{n,0}(X\setminus\ol D)$ with $\opa T=0$, in fact $T$
is a holomorphic $(n,0)$-form on $X\setminus\ol D$ which extends to
$X$ in a  holomorphic $(n,0)$-form $\theta$ by the Hartogs extension
phenomenon since $X$ is not compact, $X\setminus D$ is connected and $H^{n,1}_c(X)=0$, hence
$$<T,\opa\wt f>=\int_X\theta\wedge\opa\wt f=\int_X\opa\theta\wedge\wt f=0.$$

By the Hausdorff property of the group $H_{c,\infty}^{0,n}(X\setminus D)$, there exists a $(0,n-1)$-form $g$ with compact support in $X\setminus D$ such that
$\opa\wt f=\opa g$. Consequently the form $\wt f-g$ is a $\opa$-closed $(0,n-1)$-form on $X$ whose restriction to  $\ol D$ is equal to $f$. As $H^{0,n-1}(X)=0$, we get $\wt f-g=\opa h$ for some $\ci$-smooth form $h$ on $X$ and hence $f=\opa h$ on $D$, which proves that $H^{0,n-1}_\infty(\ol D)=0$.

Let us prove the converse. Let $f\in\ci_{0,n}(X)$ be a form with compact support in $X\setminus D$ orthogonal to the $\ci$-smooth $(n,0)$-forms on $X\setminus D$, which are holomorphic on $X\setminus \ol D$. Then, $f$ is orthogonal to the holomorphic $(n,0)$-forms in $X$ and the Hausdorff property of $H^{0,n}_c(X)$ implies that there exists a $\ci$-smooth $(0,n-1)$-form $g$ with compact support in $X$ such that $f=\opa g$. The support property of $f$ implies that $g$ restricted to $\ol D$   is a $\opa$-closed $\ci$-smooth $(0,n-1)$-form on $\ol D$. Since    $H^{0,n-1}_\infty(\ol D)=0$, we get $g=\opa h$ for some $\ci$-smooth $(0,n-2)$-form on $\ol D$. Let $\wt h$ be a $\ci$-smooth extension with compact support of $h$ to $X$, then $g-\opa\wt h$ has compact support in $X\setminus D$ and satisfies $\opa (g-\opa\wt h)=f$, which implies that $H_{c,\infty}^{0,n}(X\setminus D)$ is Hausdorff.
\end{proof}

Note that Proposition \ref{anneau-ferme} also  holds for extendable currents and  the proof follows the same lines as the previous one.
Using duality on $X\setminus D$ and Theorem \ref{sep} we get
\begin{cor}
Let $X$ be a Stein manifold of complex dimension $n\geq 2$ and $D$ be a relatively compact domain in $X$ with Lipschitz boundary and such that $X\setminus D$ is connected, then

(i) $\check H^{n,1}(X\setminus D)$ is Hausdorff if and only if $H^{0,n-1}_\infty(\ol D)$ is Hausdorff;

(ii) $H^{n,1}_\infty (X\setminus D)$ is Hausdorff if and only if $\check H^{0,n-1}(\ol D)$ is Hausdorff.
\end{cor}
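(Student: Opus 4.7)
The plan is to chain three equivalences for each of (i) and (ii), using (a) Theorem \ref{dual} to perform duality on $X\setminus D$, (b) Proposition \ref{anneau-ferme} (and its extendable-currents analog noted after its proof) to pass from a Hausdorff statement on $X\setminus D$ to a vanishing statement on $\ol D$, and (c) Theorem \ref{sep} to pass from vanishing back to Hausdorffness on $\ol D$. All three are applicable because $X$ is Stein of dimension $n\geq 2$ (giving $H^{n,1}_c(X)=0$, $H^{0,n-1}(X)=0$, and $H^{0,n}_c(X)$ Hausdorff by standard Stein theory), $D$ has Lipschitz boundary, and $X\setminus D$ is connected.

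For (i), I would recognize $(\dc^{0,\bullet}_{X\setminus D}(X),\opa)$ and $(\check\dc'^{n,n-\bullet}(X\setminus D),\opa)$ as a dual pair of complexes in the sense of Section 2, with the dual identification enabled by the Lipschitz regularity of $\partial D$. Theorem \ref{dual} then gives that $\check H^{n,1}(X\setminus D)$ is Hausdorff if and only if $H^{0,n}_{c,\infty}(X\setminus D)$ is Hausdorff. Proposition \ref{anneau-ferme} converts the latter condition into $H^{0,n-1}_\infty(\ol D)=0$, and Theorem \ref{sep}(ii) finally turns this vanishing into the Hausdorffness of $H^{0,n-1}_\infty(\ol D)$.

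For (ii), the parallel chain uses the dual pair $(\ci_{n,\bullet}(X\setminus D),\opa)$ and $(\ec'^{0,n-\bullet}_{X\setminus D}(X),\opa)$. Theorem \ref{dual} gives $H^{n,1}_\infty(X\setminus D)$ Hausdorff iff $H^{0,n}_{c,cur}(X\setminus D)$ Hausdorff, and the Dolbeault isomorphism identifies the latter topologically with $H^{0,n}_{c,\infty}(X\setminus D)$. The extendable-currents analog of Proposition \ref{anneau-ferme} then yields the equivalence with $\check H^{0,n-1}(\ol D)=0$, and Theorem \ref{sep}(v) concludes.

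The main obstacle I anticipate is checking the Fr\'echet-Schwarz or DFS hypotheses of Theorem \ref{dual} for complexes over the non-compact closed set $X\setminus D$: Section 2 develops the dual identifications only in the relatively compact setting ($\ol D$ compact), so one must transfer them to $X\setminus D$ using the Lipschitz assumption on $\partial D$ together with a suitable exhaustion of $X$. A secondary technical matter is to write out the extendable-currents version of Proposition \ref{anneau-ferme} used in (ii); its proof should follow the same lines as the original, with smooth forms replaced by extendable currents and integration by parts justified by the Lipschitz regularity of $\partial D$.
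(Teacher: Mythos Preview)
Your proposal is correct and follows essentially the same route as the paper, which simply says ``Using duality on $X\setminus D$ and Theorem \ref{sep}'': the chain (Theorem \ref{dual} on $X\setminus D$) $\to$ (Proposition \ref{anneau-ferme} or its extendable-currents version) $\to$ (Theorem \ref{sep}) is exactly what is intended. One small simplification in (ii): the Dolbeault step is unnecessary, since the extendable-currents analog of Proposition \ref{anneau-ferme} already gives directly $H^{0,n}_{c,cur}(X\setminus D)$ Hausdorff $\Leftrightarrow \check H^{0,n-1}(\ol D)=0$; your flagged technical points (applying Theorem \ref{dual} over the noncompact closed set $X\setminus D$, and spelling out the current version of Proposition \ref{anneau-ferme}) are genuine but are also glossed over in the paper.
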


 Let us consider the special example where $X=B\subset\cb^2$, a ball of radius $R\geq 2$ in $\cb^2$, then $B$ is a Stein manifold of dimension $2$ and
$D=\Delta\times\Delta$ is the bidisc, then $H^{0,1}_\infty(\ol {\Delta\times\Delta})=0$, hence $\check H^{2,1}(B\setminus (\Delta\times\Delta))$ is Hausdorff.
Similarly, we also have from Corollary \ref{cor4.3} that   $H^{2,1}(B\setminus\ol{\Delta\times\Delta})$ is Hausdorff.

On the other hand, if we consider  the Hartogs triangle $T\subset\subset  X=B\subset\cb^2$,
then using  the result of \cite{ChChHartogs} (see the end of section \ref{s3} in this paper) and Proposition \ref{anneau-ouvert}, we get

\begin{cor}\label{cor4.6}
Let $B\subset\cb^2$ be a ball of radius $R\geq 2$ in $\cb^2$ and $T$ the Hartogs triangle, then both cohomological groups $H^{0,2}_c(B\setminus \ol T)$ and $H^{2,1}(B\setminus \ol T)$ are not Hausdorff.
\end{cor}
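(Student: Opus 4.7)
The plan is to reduce both assertions to the Chaumat-Chollet obstruction for the Hartogs triangle, using Proposition~\ref{anneau-ouvert} for the first cohomology group and Serre duality (Theorem~\ref{Sdual}) for the second.

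First I verify that the hypotheses of Proposition~\ref{anneau-ouvert} hold for $X=B$, $D=T$ and $n=2$.  Since $B$ is a Stein open ball, it is non-compact and satisfies $H^{2,1}_c(B)=0$ and $H^{0,1}(B)=0$.  The Hartogs triangle $T$ is relatively compact in $B$ because $\overline T\subset\overline{\Delta\times\Delta}\subset B$ (using $R\geq 2>\sqrt 2$), and $B\setminus T$ is connected since the spherical shell $\{(z,w)\in B\,|\,|z|^2+|w|^2>2\}$ lies in $B\setminus\overline T$ and every point of $B\setminus T$ can be joined to it by increasing $|z|$ or $|w|$.

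Suppose, towards a contradiction, that $H^{0,2}_c(B\setminus\overline T)$ is Hausdorff.  Pick $\zeta\in(\Delta\times\Delta)\setminus\overline T$ and let $\alpha_\zeta\in\ec^{0,1}(\cb^2\setminus\{\zeta\})$ be the Chaumat-Chollet $\opa$-closed form with no smooth $\opa$-primitive on any neighborhood of $\overline T$.  Choose $U_{\overline T}$, a sufficiently thin open thickening of $\overline T$ inside $B$, so that $\zeta\notin U_{\overline T}$ and $B\setminus U_{\overline T}$ remains connected; this is possible because $\zeta$ lies at positive distance from $\overline T$ and because $B\setminus\overline T$ is already connected.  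Then $\alpha_\zeta|_{U_{\overline T}}\in\ec^{0,1}(U_{\overline T})$ is $\opa$-closed, so Proposition~\ref{anneau-ouvert} produces a neighborhood $V_{\overline T}\subset\subset U_{\overline T}$ of $\overline T$ and a smooth function $u\in\ec(V_{\overline T})$ with $\opa u=\alpha_\zeta$ on $V_{\overline T}$, contradicting the defining property of $\alpha_\zeta$.  Hence $H^{0,2}_c(B\setminus\overline T)$ is not Hausdorff, and Serre duality (Theorem~\ref{Sdual}) applied to the open subset $B\setminus\overline T$ of $\cb^2$ with $p=2$, $q=1$, $n=2$ immediately yields that $H^{2,1}(B\setminus\overline T)$ is not Hausdorff either.

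The only delicate step I foresee is the topological choice of $U_{\overline T}$, simultaneously avoiding $\zeta$ and preserving the connectedness of $B\setminus U_{\overline T}$, but this reduces to a standard tubular-neighborhood argument since $\overline T$ is contained in a strictly smaller ball than $B$ and its complement in $B$ is already connected.  All the cohomological content of the statement is packaged in Proposition~\ref{anneau-ouvert}, Theorem~\ref{Sdual}, and the Chaumat-Chollet construction of $\alpha_\zeta$.
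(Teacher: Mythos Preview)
Your proof is correct and follows exactly the route indicated by the paper: the contrapositive of Proposition~\ref{anneau-ouvert} together with the Chaumat--Chollet obstruction forces $H^{0,2}_c(B\setminus\overline T)$ to be non-Hausdorff, and Serre duality (Theorem~\ref{Sdual}) then handles $H^{2,1}(B\setminus\overline T)$. You have simply filled in the verification of hypotheses and the choice of $U_{\overline T}$ that the paper leaves implicit.
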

We remark that the fact that $H^{2,1}(B\setminus \ol T)$ is  not Hausdorff follows from the   Serre duality.
\medskip

\medskip

Now  we will extend partially Proposition \ref{anneau-ferme} to the $L^2$ setting. Let $W^1(D)$ be the Sobolev space
and   denote by $H^{p,q}_{W^1}(D)$ the associated cohomology groups.

\begin{prop}\label{anneau-fermeL2}
Let $X$ be a Stein  manifold of complex dimension $n\geq 2$ and $D$ be a relatively compact domain in $X$ with Lipschitz  boundary and such that $X\setminus D$ is connected.
 Then  $H_{c,L^2}^{0,n}(X\setminus\ol D)$ is Hausdorff if and only if  $H^{0,n-1}_{W^1}(D)=0$.
\end{prop}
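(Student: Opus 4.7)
The plan is to mirror the proof of Proposition \ref{anneau-ferme} in the Sobolev/$L^2$ setting, exploiting the duality between the strong minimal realization $\opa_c$ on $L^2_{0,\bullet}(X\setminus\ol D)$ and the weak maximal realization $\opa$ on $L^2_{n,\bullet}(X\setminus\ol D)$. At bidegree $(0,n)$ this duality identifies $\ke(\opa_c)^*$ (via the Hodge $\star$) with the space of $L^2$ holomorphic $(n,0)$-forms on $X\setminus\ol D$. Since $X$ is Stein of dimension $n\geq 2$ and $X\setminus D$ is connected, $H^{n,1}_c(X)=0$, so by the Hartogs phenomenon every such $(n,0)$-form extends to a holomorphic $(n,0)$-form on $X$.

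For the direction $(\Rightarrow)$, take an $\opa$-closed $f\in W^1_{0,n-1}(D)$. By the Calder\'on--Stein extension for Lipschitz domains followed by a cutoff, extend $f$ to $\wt f\in W^1_{0,n-1}(X)$ with compact support. Then $\opa\wt f\in L^2_{0,n}(X)$ has compact support and vanishes on $D$. By Proposition \ref{adh}, $\opa\wt f$ lies in $\overline{\im\opa_c}$ iff it annihilates every $L^2$ holomorphic $(n,0)$-form $\theta$ on $X\setminus\ol D$; extending $\theta$ to $\wt\theta$ on $X$ and integrating by parts gives
$$\int_{X\setminus\ol D}\opa\wt f\wedge\theta \;=\;\int_X\opa\wt f\wedge\wt\theta\;=\;\pm\int_X\wt f\wedge\opa\wt\theta \;=\;0.$$
The Hausdorff hypothesis then yields $v\in\Dom(\opa_c)$ with $\opa_c v=\opa\wt f$, and the extension-by-zero lemma of Section~2 gives $v^0\in L^2(X)$ with $\opa v^0=\opa\wt f$ in the sense of distributions. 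Hence $u:=\wt f-v^0\in L^2_{0,n-1}(X)$ is $\opa$-closed with $u|_D=f$. To produce $h\in W^1_{0,n-2}(D)$ with $\opa h=f$, I choose a smoothly bounded pseudoconvex $\Omega$ with $\ol D\subset\subset\Omega\subset\subset X$ (available because $X$ is Stein) and take the Kohn canonical solution $h=\opa^* Nu$ to $\opa h=u$ on $\Omega$; interior elliptic regularity of the Kodaira Laplacian $\square=\opa\vartheta+\vartheta\opa$ applied to $\square h=\vartheta u$ with $u\in L^2(\Omega)$ gives $h\in W^1_{\loc}(\Omega)$, whence $h|_D\in W^1_{0,n-2}(D)$.

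The reverse implication is parallel. Given $u\in L^2_{0,n}(X\setminus\ol D)$ orthogonal to all $L^2$ holomorphic $(n,0)$-forms on $X\setminus\ol D$, extend by zero to $u^0\in L^2_{0,n}(X)$ and use Hartogs to deduce $u^0\perp$ holomorphic $L^2$ $(n,0)$-forms on $X$; H\"ormander's $L^2$-theory combined with the canonical solution on a smoothly bounded pseudoconvex $\Omega$ with $\ol D\subset\subset\Omega$ produces $h\in W^1_{\loc}(\Omega)$ solving $\opa h=u^0$. Since $h|_D$ is a $\opa$-closed $W^1$ form, the hypothesis supplies $w\in W^1_{0,n-2}(D)$ with $\opa w=h|_D$; extending $w$ to a compactly supported $\wt w\in W^1_{0,n-2}(X)$ and patching $h-\opa\wt w$ on $\Omega$ with a H\"ormander solution outside (via suitable weights on the Stein manifold $X$) produces $\wt v\in L^2(X)$ with $\wt v|_D=0$ and $\opa\wt v=u^0$. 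The extension-by-zero lemma then shows $\wt v|_{X\setminus\ol D}\in\Dom(\opa_c)$ with $\opa_c(\wt v|_{X\setminus\ol D})=u$, giving closed range.

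The principal obstacle in both directions is the $W^1$-regularity of the $\opa$-solution across the merely Lipschitz boundary of $D$: one cannot solve $\opa h=u$ on $D$ itself with control on the regularity, but the Stein hypothesis permits replacing $D$ by a smoothly bounded pseudoconvex Stein neighborhood $\Omega\supset\ol D$ where the $\opa$-Neumann machinery applies, after which interior ellipticity of $\square$ delivers the required $W^1_{\loc}$ gain. A secondary technical point is globalising the H\"ormander solution to all of $X$ in the $(\Leftarrow)$ direction; this is handled by standard plurisubharmonic weights together with a cutoff argument, using that the relevant data $u^0$ is $L^2$ on $X$.
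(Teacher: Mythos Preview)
Your forward direction $(\Rightarrow)$ is essentially the paper's argument: extend $f$ to a compactly supported $W^1$ form $\wt f$, show $\opa\wt f$ annihilates all $L^2$ holomorphic $(n,0)$-forms on $X\setminus\ol D$ via Hartogs, invoke the Hausdorff hypothesis to solve $\opa_c v=\opa\wt f$, and then use Stein plus interior regularity to solve $\opa h=\wt f-v^0$ with $h\in W^1$ on $D$.  Your use of a smoothly bounded pseudoconvex $\Omega\supset\supset\ol D$ and the Kohn canonical solution simply makes explicit the paper's one-line appeal to ``$H^{0,n-1}_{L^2_{\loc}}(X)=0$'' together with ``interior regularity for $\opa$.''

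The reverse direction $(\Leftarrow)$ is where you diverge, and where your argument has a genuine gap.  The paper proceeds much more directly: given $f\in L^2_{0,n}(X)$ with compact support in $X\setminus D$ and orthogonal to holomorphic $(n,0)$-forms, it invokes the Hausdorff property of $H^{0,n}_{c,L^2}(X)$ (which holds because $X$ is Stein) to obtain a \emph{global, compactly supported} $g\in L^2_{0,n-1}(X)$ with $\opa g=f$.  Interior regularity then gives $g|_D\in W^1_{0,n-1}(D)$; the hypothesis yields $h\in W^1_{0,n-2}(D)$ with $\opa h=g$ on $D$; one extends $h$ to $\wt h\in W^1(X)$ with compact support, and then $g-\opa\wt h$ vanishes on $D$, has compact support in $X$, and solves $\opa(g-\opa\wt h)=f$.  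No patching is needed.

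By contrast, you solve $\opa h=u^0$ only on a relatively compact $\Omega\supset\ol D$ and then propose to ``patch $h-\opa\wt w$ on $\Omega$ with a H\"ormander solution outside (via suitable weights).''  This step is not justified.  Two solutions of $\opa v=u^0$ on overlapping regions differ by a $\opa$-closed $(0,n-1)$-form, and gluing them requires solving an auxiliary $\opa$-equation on the overlap $\Omega\setminus\ol D$, which is in general neither pseudoconvex nor has any useful cohomological vanishing.  Furthermore, a weighted H\"ormander solution on the noncompact manifold $X$ need not lie in unweighted $L^2(X)$, so the asserted $\wt v\in L^2(X)$ is not secured.  The paper's device of using closed range of $\opa_c$ on $X$ itself to produce a globally compactly supported primitive is precisely what eliminates this difficulty; you should replace your local-solve-and-patch strategy with that step.
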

\begin{proof}
Let $f\in W^1_{0,n-1}(D)$ be a $\opa$-closed form on $D$, and let $\wt f$ be a  $W^1$ extension with compact support of $f$ to $X$. This is possible since the boundary of $D$ is Lipschitz.  The $(0,n)$-form $\opa\wt f$ has $L^2$ coefficients and compact support in $X\setminus D$. Furthermore, it    satisfies
$$\int_{X\setminus D}\theta\wedge\opa\wt f=0$$ for all holomorphic $(n,0)$-form on $X\setminus\ol D$.  In fact, since $X\setminus D$ is connected, by the Hartogs extension phenomenon,  $\theta$ extends to $X$ in a  holomorphic $(n,0)$-form $\wt\theta$ and
$$\int_{X\setminus D}\theta\wedge\opa\wt f=\int_X\wt\theta\wedge\opa\wt f=\int_X\opa\wt\theta\wedge\wt f=0.$$

By the Hausdorff property of the group $H_{c,L^2}^{0,n}(X\setminus\ol D)=H_{\wt c,L^2}^{0,n}(X\setminus\ol D)$, there exists a form $g\in L^2_{(0,n-1)}(X)$ with compact support in $X\setminus D$ such that
$\opa\wt f=\opa g$. Consequently the form $\wt f-g$ is a $\opa$-closed $(0,n-1)$-form on $X$ whose restriction to  $D$ is equal to $f$. As $X$ is Stein,  $H_{L^2_{\text{loc}}}^{0,n-1}(X)=0$. Thus  we get $\wt f-g=\opa h$ for some $L^2_{\text{loc}}$ form $h$ on $X$. It follows from the interior regularity for $\opa$, we can have $h\in W^1(D)$ and hence $f=\opa h$ on $D$, which proves that $H^{0,n-1}_{W^1}(D)=0$.

Conversely let $f\in L^2_{0,n}(X)$ with compact support in $X\setminus D$, orthogonal to the $\opa$-closed $(n,0)$-forms $L^2$ in $X\setminus D$ and in particular to the holomorphic $(n,0)$-forms in $X$. The Hausdorff property of $H^{0,n}_{c,L^2}(X)$ implies that there exists a $(0,n-1)$-form $g\in L^2_{0,n-1}(X)$ with compact support in $X$ such that $f=\opa g$. Using the interior regularity, we have $g$ has $W^1$ coefficients on $D$. Since the support of $f$ is contained in $X\setminus D$, $g$ is $\opa$-closed in $D$ and as $H^{0,n-1}_{W^1}(D)=0$, we get $g=\opa h$ for some $(0,n-2)$-form $h$ in $W^1_{0,n-2}(D)$. Let $\wt h$ be a $W^1$ extension of $h$ with compact support in  $X$, then $g-\opa\wt h$ vanishes on $D$ and satisfies $\opa (g-\opa\wt h)=f$. This shows that $H_{c,L^2}^{0,n}(X\setminus\ol D)$ is Hausdorff.

\end{proof}

Using the $L^2$-duality between $\opa$ and $\opa_{c}$,  we get
\begin{cor}
Let $X$ be a bounded pseudoconvex domain in $\cb^n$,  $n\geq 2$ and $D$ be a relatively compact domain in $X$ with Lipschitz  boundary and such that $X\setminus D$ is connected, then either $H^{0,n-1}_{W^1}(D)=0$ or $H^{n,1}_{L^2} (X\setminus D)$ is not Hausdorff.
\end{cor}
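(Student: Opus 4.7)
The plan is to combine Proposition~\ref{anneau-fermeL2} with the $L^2$-duality between the weak maximal realization $\opa$ and the strong minimal realization $\opa_c$ on $X\setminus D$, exactly as the remark preceding the statement suggests.

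A bounded pseudoconvex domain $X\subset\cb^n$ is automatically Stein, so the hypotheses of Proposition~\ref{anneau-fermeL2} are fulfilled: $X$ is Stein, $D\subset\subset X$ has Lipschitz boundary, $X\setminus D$ is connected, and $n\geq 2$. That proposition produces the equivalence
\[
H^{0,n}_{c,L^2}(X\setminus\ol D)\text{ is Hausdorff}
\ \iff\ H^{0,n-1}_{W^1}(D)=0.
\]
Because $\partial D$ has Lebesgue measure zero and because the earlier lemma identifies $H^{p,q}_{c,L^2}$ with $H^{p,q}_{\wt c,L^2}$ when $\partial D$ is Lipschitz, the left-hand group coincides with $H^{0,n}_{c,L^2}(X\setminus D)$.

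Next I invoke the $L^2$-duality on the open set $X\setminus D\subset\cb^n$. The Hilbert adjoint of the closed densely defined weak maximal operator $\opa:L^2_{n,0}(X\setminus D)\to L^2_{n,1}(X\setminus D)$ is the strong minimal $\vartheta_c$, which via the Hodge star is identified (up to sign) with the strong minimal $\opa_c:L^2_{0,n-1}(X\setminus D)\to L^2_{0,n}(X\setminus D)$. Since a closed densely defined operator between Hilbert spaces has closed range if and only if its adjoint does, I obtain
\[
H^{n,1}_{L^2}(X\setminus D)\text{ is Hausdorff}
\ \iff\ H^{0,n}_{c,L^2}(X\setminus D)\text{ is Hausdorff}.
\]
Chaining this with the previous equivalence yields that $H^{n,1}_{L^2}(X\setminus D)$ is Hausdorff if and only if $H^{0,n-1}_{W^1}(D)=0$, and the stated dichotomy is the contrapositive. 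The only delicate point is the Hodge-star/adjoint bookkeeping that equates the weak $\opa$ with the strong $\opa_c$ in the appropriate bidegree and matches the $c$ and $\wt c$ variants on $X\setminus D$ with those on $X\setminus\ol D$; both steps are standard in this circle of ideas once $\partial D$ is Lipschitz (cf.~\cite{ChaSh}), and in particular no regularity of $\partial X$ is required.
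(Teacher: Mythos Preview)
Your proof is correct and follows exactly the route the paper intends: the paper derives the corollary from Proposition~\ref{anneau-fermeL2} by the single phrase ``Using the $L^2$-duality between $\opa$ and $\opa_c$, we get'', and you have supplied precisely those details---the closed range theorem for Hilbert space adjoints, the Hodge-star identification of $\opa^*$ with $\opa_c$ in the complementary bidegree, and the observation that a bounded pseudoconvex domain is Stein. If anything, you obtain the full equivalence (Hausdorff $\Leftrightarrow$ vanishing), which is slightly stronger than the stated dichotomy.
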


We remark that when $D$ is a pseudoconvex domain with $\cc^2$ boundary, we can further obtain  the duality
between $L^2$ cohomologies  $H_{L^2}^{n,n-1}(X\setminus\ol D)$  and the Bergman space  $H^{0,0}_{L^2}(D)$ (see
\cite{Hor},
\cite{Sh1}, \cite{Sh2}).   However, not much is known when $D$ has only Lipschitz boundary.

\providecommand{\bysame}{\leavevmode\hbox to3em{\hrulefill}\thinspace}
\providecommand{\MR}{\relax\ifhmode\unskip\space\fi MR }
\providecommand{\MRhref}[2]{%
  \href{http://www.ams.org/mathscinet-getitem?mr=#1}{#2}
}
\providecommand{\href}[2]{#2}

\enddocument

\end